\documentclass{amsart}
\usepackage{amssymb}
\usepackage{enumitem}
\usepackage[color=green!40]{todonotes}
\usetikzlibrary{patterns}
\usepackage{mathtools}
\usepackage{xfrac}
\usepackage{comment}
\usepackage{tikz,pgfplots}
\usepackage{tikz-cd}
\usepackage{mathrsfs}
\usepackage{bbm}
\usepackage{hyperref}
\usepackage{environ}
\makeatletter
\providecommand{\env@tikzpicture@save@env}{}
\providecommand{\env@tikzpicture@process}{}

%%%%%% ENVIRONMENTS
\theoremstyle{definition}
\newtheorem{remark}{Remark}[section]
\newtheorem{remarks}{Remarks}[section]

\theoremstyle{plain}
\newtheorem{theorem}{Theorem}[section]
\newtheorem{corollary}{Corollary}[section]
\newtheorem{lemma}{Lemma}[section]
\newtheorem{proposition}{Proposition}[section]

\theoremstyle{definition}
\newtheorem{definition}{Definition}[section]

%%%%%%%%%% NUMBER SYSTEMS %%%%%%%%%%
\DeclareMathOperator{\N}{\mathbb{N}}
\DeclareMathOperator{\Z}{\mathbb{Z}}
\DeclareMathOperator{\Q}{\mathbb{Q}}
\DeclareMathOperator{\R}{\mathbb{R}}

\DeclareMathOperator{\St}{St}
\DeclareMathOperator{\C}{C}
\DeclareMathOperator{\Aff}{{\rm A}}

%%%%%%%CATEGORIES %%%%%%
 %Distributive lattices
 % Boolean algebras

%%%%%% Shorthands %%%%
\newcommand{\2}{\mathbbm{2}}
\newcommand{\seq}{\subseteq}
\DeclareMathOperator{\Ker}{Ker}

%%%%%% CALS, FRAKS, BOLDS

%%%%%% VECTORS
\newcommand{\ba}{\mathbf{a}}
\newcommand{\bb}{\mathbf{b}}
\newcommand{\bc}{\mathbf{c}}
\newcommand{\bd}{\mathbf{d}}

%% POLYHEDRA

\DeclareMathOperator{\conv}{conv}
\newcommand{\PolyZ}{{\sf PL}_{\mathbb{Z}}}
\newcommand{\Poly}{{\sf PL}}
\DeclareMathOperator{\MM}{\nabla_{\mathbb{Z}}}
\DeclareMathOperator{\den}{\rm den}

%%%%%% REDEFINITIONS

\renewcommand{\phi}{\varphi}
\renewcommand{\epsilon}{\varepsilon}

\renewcommand{\leq}{\leqslant}
\renewcommand{\geq}{\geqslant}

%%%%%% MISC
\DeclareMathOperator{\Rad}{Rad}
\DeclareMathOperator{\Max}{Max}

%%%%%%%CATEGORIES and FUNCTORS %%%%%%
\newcommand{\MValg}{{\sf MV}}    		% MV-algebras
\newcommand{\MV}{{\sf MV}}
\newcommand{\Au}{{{\sf A_1^\ell}}}      % l-groups
%\newcommand{\lGr}{{\ell{\sf G}}}		
		% MV-algebras with probability maps 
\newcommand{\MValgs}{{\sf MV_{st}}}		% MV-algebras with states as morphisms 
\newcommand{\MVfp}{{\sf MV_{fp}}}
\newcommand{\MVfpop}{{\sf MV_{fp}^{\rm op}}}

\newcommand{\As}{{{{\sf A_1}}}}         % l-groups with states
\renewcommand{\P}{{\sf ES}}			    % probability algebras
\renewcommand{\S}{{\sf S}}			    % states, l-groups
				% Abelian po-groups with units

\newcommand{\Set}{{\sf Set}}
\newcommand{\Sett}{{\sf Set^2}}

%% Additional commands for expanded introduction
\newcommand{\A}{\mathscr{A}}
\DeclareMathOperator{\Bd}{\mathscr{M}_{\rm b}}
%%%New MSC
\makeatletter
\@namedef{subjclassname@2020}{%
  \textup{2020} Mathematics Subject Classification}
\makeatother

\title[Two-sorted algebraic theory of states, and the universal states]{The two-sorted algebraic theory of states,\\ and the universal states of MV-algebras} 
\author{Tom\'a\v{s} Kroupa}
\address[T. Kroupa]{Institute of Information Theory and Automation of the CAS \\ Pod Vod\'arenskou v\v{e}\v{z}\'i 4, 182\,08 Prague, Czech Republic\\ \and Artificial Intelligence Center, Faculty of Electrical Engineering, Czech Technical University in Prague, Czech Republic}
\email{tomas.kroupa@fel.cvut.cz}
\author{Vincenzo Marra}
\address[V. Marra]{Dipartimento di Matematica ``Federigo Enriques'' \\ Universit\`a degli Studi di Milano \\ Via Cesare Saldini 50, 20133 Milano, Italy}
\email{vincenzo.marra@unimi.it}
%\date{31 December 2019}

\keywords{State, lattice-ordered Abelian group, MV-algebra,  multi-sorted algebra, free object, universal state, affine representation}
\subjclass[2020]{06D35, 06F20, 03C05}

\begin{document}
\begin{abstract}States of unital Abelian lattice-groups---that is, normalised positive group homomorphisms to $\R$---provide an abstraction of  expected-value operators. A well-known theorem due to Mundici asserts that the category of unital lattice-groups (with unit-preserving lattice-group homomorphisms as morphisms) is equivalent to the algebraic category of MV-algebras, and their homomorphisms. Through this equivalence, states of lattice-groups naturally correspond to certain $[0,1]$-valued functionals on MV-algebras, which are also known as states in the literature. In this paper we allow states to take values in any unital lattice-group (or in any MV-algebra, in the MV-algebraic setting) rather than just in $\R$ (or just in $[0,1]$, respectively).
  We introduce a~two-sorted algebraic theory whose models are precisely states of MV-algebras. We extend Mundici's equivalence to one between the category of MV-algebras with states as morphisms, and the category of unital Abelian lattice-groups  with, again, states as morphisms. Thus, the models of our two-sorted theory may also be regarded as states between unital Abelian lattice-groups, to within an~equivalence of categories. 
As our first main result, we derive the existence of the \emph{universal state} of any MV-algebra from the existence of free algebras in multi-sorted algebraic categories. The significance of the universal state of a~given algebra is that it provides (an algebraic abstraction of) the most general expected-value operator on that  algebra---a construct that is not available if one insists that states be real-valued. In the remaining part of the paper, we seek concrete representations of such universal states. We begin  by clarifying the relationship of  universal states with the theory of affine representations of lattice-groups: the universal state $A\to B$ of the MV-algebra $A$ is shown to coincide with a certain modification of Choquet's affine representation (of~the~unital lattice-group corresponding to $A$) if, and only if, $B$ is semisimple. An MV-algebra is locally finite if each one of its finitely generated subalgebras is finite; locally finite MV-algebras are semisimple, and Boolean algebras are instances of locally finite MV-algebras. Our second main result is then that the universal state of any locally finite MV-algebra has semisimple codomain, and can thus be described through our adaptation of Choquet's affine representation.
\end{abstract}

\maketitle

\section{Introduction}
We are concerned in this paper with the algebraic theory of states of MV-algebras and Abelian lattice-groups with a (strong order) unit, here called ``unital Abelian $\ell$-groups''. Usually, states are defined as normalised positive real-valued linear functionals
on Riesz spaces with unit or, more generally, normalised positive group homomorphisms to $\R$ of unital Abelian $\ell$-groups \cite{Goodearl86}.  Part of their importance stems from the~long-recognised fact  that unital Abelian lattice-groups and their states provide an abstraction of bounded real random variables and of expected-value operators, respectively. To illustrate, the collection $\Bd{(X,\A)}$ of bounded $\A$-measurable functions from $X$ to $\R$ is an Abelian $\ell$-group under pointwise addition and order; and the function $X\to \R$ constantly equal to $1$ is a unit of this $\ell$-group, due to our boundedness assumption. If $\mu$ is a probability measure on $\A$, then the expected-value operator $\int_X-\,{\rm d}\mu\colon \Bd{(X,\A)}\to \R$ is a state of the unital Abelian $\ell$-group  $\Bd{(X,\A)}$. 

In this paper we allow states to take values in any unital Abelian $\ell$-group, and not just in the real numbers; thus, if $G$ and $H$ are such groups, a state of $G$ with values in $H$ is a positive group homomorphism from $G$ to $H$ that carries the unit of $G$ to the unit of $H$. We shall see that this level of generality allows us to investigate universal constructions that, while  ubiquitous in algebra, are not available if one insists that states be real-valued. We shall be interested, specifically, in the existence of a ``most general state'', or \emph{universal state}, of a given unital Abelian $\ell$-group. We will prove that such a universal state indeed always exists, as a consequence of the standard fact that free algebras exist in algebraic categories. One obstruction to this plan is that the category of unital Abelian $\ell$-groups and their unit-preserving homomorphisms is not algebraic with respect to its underlying-set functor. This is  because the characteristic property of the unit $1$, that its multiples $n\coloneqq {1+\cdots+1}$ ($n$ times) should eventually exceed any given element, is not even  definable in first-order logic, by a standard compactness argument. Nonetheless, a well-known result of Mundici \cite{Mundici86} tells us that the category of unital Abelian $\ell$-groups is equivalent to the algebraic category of MV-algebras \cite{CignoliOttavianoMundici00} and their homomorphisms. We shall prove a version of Mundici's result for states, rather than just for unital homomorphisms, that will allow us to carry our programme out to within an equivalence of categories.

To begin with, the theory of (real-valued)~states has already been adapted to MV-algebras. The original reference for this is \cite{Mundici95}, where MV-algebraic states  were introduced with the  motivation of modelling the notion of ``average truth degree" in many-valued logic. For a primer on states of MV-algebras see \cite{Mundici11,FlaminioKroupa15}.
In line with what is discussed above for lattice-groups, in this paper we consider states between any two MV-algebras; see Definition \ref{d:state} below. Basic facts about the functor $\Gamma$ that features in Mundici's equivalence are recalled in Section \ref{sec:Gamma}. In Theorem \ref{thm:generalGamma} we extend Mundici's equivalence to one between  $\MValgs$ (the category whose objects are MV-algebras and whose morphisms are states) and $\As$ (the category whose objects are unital Abelian $\ell$-groups and whose morphisms are states). In Section \ref{s:statesvar} states are treated as two-sorted algebras, cf.\ Definition \ref{d:statesasalgs}. The elementary and yet key Proposition~\ref{p:equational} about their equational presentation is proved. The proposition enables us to identify the~category of states $\P$ with the category of models in $\Set$ of a~finitely axiomatised two-sorted equational theory. There is a  corresponding, non-algebraic category $\S$ of~states between unital Abelian $\ell$-groups. Theorem \ref{t:eqcharofstates} then shows that the categories $\P$ and $\S$ are equivalent. Section \ref{s:freestates} deals with free objects (free states) in $\P$, and with universal states (see below for precise definitions). We describe the free object generated by a two-sorted set  by universal states and binary coproducts in the category of MV-algebras (Theorem \ref{t:freesums}). This constitutes our first main result.

The second part of our paper is devoted to the issue of representing universal states explicitly, insofar as this is possible. For this, in Section \ref{s:choquet} we show how Choquet's theory of  affine representations \cite[Chapters 5--7]{Goodearl86} relates to the construction of a~universal state. Specifically, Proposition~\ref{pro:semisimplecodomain} says that the codomain of a~universal state is Archimedean (or semisimple, in the case of MV-algebras) precisely when it coincides with the  extended form of the affine representation that we introduce. Thus, universal states with a semisimple codomain admit of a satisfactory concrete description through affine representations. Our second main result, proved in Section \ref{s:univlocfin}, is that the codomain of~the~universal state of~any locally finite MV-algebra (in particular, of any Boolean algebra) is semisimple. The~proof uses the~duality between finitely presented MV-algebras and the category of compact rational polyhedra with piecewise linear maps with integer coefficients as morphisms.

We assume familiarity with MV-algebras and unital Abelian $\ell$-groups; see \cite{CignoliOttavianoMundici00, Mundici11} and \cite{BigardKeimelWolfenstein77, Goodearl86} for background information. We  often adopt the~standard practice in algebra of omitting underlying-set functors, if clarity is not impaired. We shall also omit parentheses in application of functors and functions, writing e.g.\ $FI$ in place of $F(I)$, when this improves readability. We assume $\N\coloneqq \{1,2,\ldots\}$.

\section{Mundici's equivalence, for states}\label{sec:Gamma} Let $\Au$ be the category that has unital Abelian $\ell$-groups as objects and unital $\ell$-homomorphisms as morphisms, and let $\MValg$ be the category of MV-algebras and their homomorphisms. In \cite[Theorem 3.9]{Mundici86}, Mundici established a categorical equivalence between $\Au$ and  $\MValg$ which we recall here, without proofs. To each unital $\ell$-group $G$ we associate its \emph{unit interval}
\begin{equation}\label{def:Gamma}
 \Gamma(G,1)\coloneqq \{a\in G\mid 0\leq a \leq 1\},
 \end{equation}
equipped with the operations
\begin{align*}
a\oplus b & \coloneqq (a+b)\wedge 1, \\
\neg a & \coloneqq 1-a.
\end{align*}
Then $(\Gamma(G,1),\oplus,\neg,0)$ is an MV-algebra \cite[Proposition 2.1.2]{CignoliOttavianoMundici00}. We will also make use of binary operations $\odot$ and $\ominus$ on $\Gamma(G,1)$ defined by
\begin{align*}
  a\odot b & \coloneqq \neg(\neg a \oplus \neg b),\\ 
  a\ominus  b & \coloneqq a \odot \neg b.
  \end{align*}
The operations $+,\oplus$, and $\odot$ in $\Gamma(G,1)$ are related as follows (see \cite[Lemma~2.1.3(i)]{CignoliOttavianoMundici00}):
\begin{equation}\label{eq:sum}
  a+b=(a\oplus b) + (a\odot b), \qquad a,b\in \Gamma(G,1).
\end{equation}

Since every unital $\ell$-homomorphism $f\colon G\to H$ restricts to a homomorphism of MV-algebras $\Gamma(G,1)\to \Gamma(H,1)$, we obtain a functor
\begin{equation}\label{eq:Gamma}
\Gamma\colon \Au \longrightarrow \MValg.
\end{equation}
If the unit $1$ is understood, we  write $\Gamma G$ in place of $\Gamma(G,1)$.

In order to describe a functor in the other direction, the notion of good sequence was introduced in \cite{Mundici86} (see also \cite[Chapter 2 and 7]{CignoliOttavianoMundici00}). For $M$ an MV-algebra, we say that $\ba\coloneqq (a_{i})_{i\in \N} \in M^{\N}$ is a \emph{good sequence} in $M$ if $a_{i}\oplus a_{i+1}=a_{i}$ for each $i\in \N$, and there is $n_{0}\in \N$ such that $a_{n}=0$ for all $n\geq n_{0}$. We shall write $(a_{1},\dots,a_{k})$ in place of $(a_{1},\dots,a_{k},0,0,\dots)$;
in particular, $(a)$ is short for $(a,0,0,\dots)$, given $a\in M$. Addition of good sequences $(a_{i})_{i\in \N}$ and $(b_{i})_{i\in \N}$ is defined by
\[
	(a_{i})_{i\in \N} + (b_{i})_{i\in \N} \coloneqq (a_{i}\oplus(a_{i-1}\odot b_{1  }) \oplus \dots \oplus (a_{1}\odot b_{i-1})\oplus b_{i})_{i\in \N}.
\]
The set of all good sequences in $M$ equipped with $+$ becomes a commutative monoid $A_{M}\subseteq M^{\N}$ with neutral element $(0)$. By general algebra, the full inclusion of the category of Abelian groups into that of commutative monoids has a left adjoint; write  $\eta_{A_M}\colon A_M\to \Xi M$ for the component at $A_M$ of the unit of this adjunction. The monoid $A_{M}$ can be shown to be cancellative, so the monoid  homomorphism $\eta_{A_M}$ is injective. To describe the elements of $\Xi M$ explicitly, let us say two ordered pairs of good sequences $(\ba,\bb)$ and $(\ba',\bb')$ are \emph{equivalent} if 
\[
\ba+\bb'=\ba'+ \bb,
\]
and let us write $[\ba,\bb]$ for the equivalence class of $(\ba,\bb)$. Then $\Xi M$ is defined as the set of all equivalence classes of the form $[\ba,\bb]$ equipped with  the addition
\[
[\ba,\bb] + [\bc,\bd] \coloneqq [\ba+\bc,\bb+\bd],
\]
with the neutral element $[(0),(0)]$, and  with the unary inverse operation
\[
-[\ba,\bb]\coloneqq[\bb,\ba].
\]
  Moreover, the monoid $A_M$ is lattice-ordered by the restriction of  the product order of $M^{\N}$, and this lattice order on $A_{M}$ extends (in the obvious sense, through the injection $\eta_{A_M}$) to exactly one  translation-invariant lattice order on $\Xi M$. Thus, $\Xi M$ is an Abelian $\ell$-group with unit $[(1),(0)]$. 
\begin{lemma}\label{thm:review}
For any  MV-algebra $M$, and any unital Abelian $\ell$-group  $G$, the following hold.
\begin{enumerate}
\item The function $\phi_M \colon M \to \Gamma\Xi M$ given by \[\phi_M(a)\coloneqq [(a),(0)],\quad a\in M,\] is an isomorphism of MV-algebras.
\item The lattice-ordered monoids $G^{+}\coloneqq\{a\in G\mid a\geq 0\}$ and $A_{\Gamma G}$ are isomorphic through the function $g\colon G^{+}\to A_{\Gamma G}$ that sends $a\in G^{+}$ to the unique good sequence \[g(a)\coloneqq (a_{1},\dots,a_{n})\] of elements $a_{i}\in \Gamma G$ such that $a=a_{1}+\dotsb + a_{n}$.
\item The function $\epsilon_G\colon G\to \Xi\Gamma G$ defined by
\begin{equation*}
\epsilon_G (a) \coloneqq [g(a^+),g(a^-)], \quad a\in G,
\end{equation*}
is an isomorphism of  unital $\ell$-groups, where $g$ is as in item (2) above, and, as usual, $a^{+}\coloneqq a\vee 0$, $a^{-}\coloneqq-a \vee 0$.
\end{enumerate}
\end{lemma}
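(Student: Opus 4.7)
The plan is to verify (1), (2), (3) in turn, each part relying on its predecessors. For (1), one first checks that $\phi_M$ lands in $\Gamma\Xi M$: as $(0)\leq (a)\leq (1)$ in the product order on $A_M\subseteq M^{\N}$, the element $[(a),(0)]$ lies in the unit interval of $\Xi M$. Injectivity of $\phi_M$ is immediate from injectivity of $\eta_{A_M}$ together with the observation that $(a)=(b)$ in $A_M$ forces $a=b$. Surjectivity reduces to showing that every $x\in\Xi M$ with $[(0),(0)]\leq x\leq [(1),(0)]$ admits a representative $[\ba,(0)]$ with $\ba$ a good sequence dominated by $(1)$ in the product order; such a sequence must be of the form $(a)$ for a unique $a\in M$. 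Preservation of the operations unfolds into identities in $A_M$: the equality $\phi_M(a)\oplus\phi_M(b)=\phi_M(a\oplus b)$ reduces via \eqref{eq:sum} to $(a)+(b)=(a\oplus b,\,a\odot b)$ in $A_M$, whose truncation to the unit is $(a\oplus b)$; the preservation of negation and of $0$ is routine.

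For (2), the heart of the argument is existence and uniqueness of the decomposition $a=a_1+\cdots+a_n$ with $(a_1,\dots,a_n)$ a good sequence in $\Gamma G$. Existence proceeds iteratively by setting $a_1\coloneqq a\wedge 1$, replacing $a$ by $a-a_1\geq 0$, and repeating; the procedure terminates because the unit property supplies $n\in\N$ with $a\leq n\cdot 1$. Uniqueness rests on the structural observation that a good sequence $(a_1,a_2,\ldots)$ in $\Gamma G$ satisfies $a_{i+1}=0$ whenever $a_i<1$---a direct unfolding of $a_i\oplus a_{i+1}=a_i$ in the unit interval---so any valid decomposition must consist of a run of $1$'s followed by the unique residual term in $[0,1)$. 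The verification that $g$ is a monoid isomorphism then matches $g(a+b)$ against the defining formula for the sum of good sequences by repeated application of \eqref{eq:sum}, while order preservation follows because both orders are inherited coordinatewise and agree on canonical decompositions.

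Part (3) then follows largely formally from (1) and (2). Additivity $\epsilon_G(a+b)=\epsilon_G(a)+\epsilon_G(b)$ is obtained by rewriting $(a+b)^+ + a^- + b^- = (a+b)^- + a^+ + b^+$ in $G^+$, applying the monoid homomorphism $g$, and passing to the Grothendieck group $\Xi\Gamma G$. Injectivity of $\epsilon_G$ uses injectivity of $g$: if $[g(a^+),g(a^-)]=[(0),(0)]$, then $g(a^+)=g(a^-)$, so $a^+=a^-$ and $a=0$. For surjectivity, every $[\ba,\bb]\in\Xi\Gamma G$ is of the form $[g(u),g(v)]$ for unique $u,v\in G^+$ by (2), and $a\coloneqq u-v$ then satisfies $\epsilon_G(a)=[\ba,\bb]$ by the same Grothendieck-group equivalence. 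The unit $1\in G$ is sent to $[(1),(0)]$ by direct computation, and order preservation---hence preservation of the lattice structure, using that both $G$ and $\Xi\Gamma G$ are $\ell$-groups with the positive cone determining the order---follows from $a\geq 0 \Leftrightarrow a^-=0 \Leftrightarrow \epsilon_G(a)\geq [(0),(0)]$.

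The principal obstacle is the structural analysis underlying (2): pinning down the precise shape of good sequences in $\Gamma G$ and extracting from it the uniqueness of the canonical decomposition. Once this is in place, the remaining verifications in all three parts amount to essentially mechanical translations among the three calculi (in $M$, in $A_M$, and in $G$), with identity \eqref{eq:sum} as the bridge between the additive and truncated-additive operations.
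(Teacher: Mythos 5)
The paper itself does not prove this lemma: its ``proof'' is a citation of Theorem 2.4.5, Lemma 7.1.5, and Corollary 7.1.6 of the Cignoli--D'Ottaviano--Mundici monograph. So you are attempting genuinely more than the paper does, which is fine in principle. Your outlines of parts (1) and (3) follow the standard route and are essentially sound, granted the background facts about the construction of $\Xi M$ that the paper sets up (in particular, that the positive cone of $\Xi M$ is the image of $A_M$ and that the order of $\Xi M$ restricts on $A_M$ to the product order).

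There is, however, a genuine gap in part (2), which you yourself identify as the heart of the matter. The structural claim on which your uniqueness argument rests --- that a good sequence $(a_1,a_2,\dots)$ in $\Gamma G$ satisfies $a_{i+1}=0$ whenever $a_i<1$, so that every good sequence is ``a run of $1$'s followed by a single residual term'' --- is false unless $G$ is totally ordered. Take $G=\Z^2$ with unit $u=(1,1)$ and consider the sequence $\bigl((1,0),(1,0),0,0,\dots\bigr)$ in $\Gamma G$: one has $(1,0)\oplus(1,0)=\bigl((1,0)+(1,0)\bigr)\wedge(1,1)=(2,0)\wedge(1,1)=(1,0)$, so this is a good sequence, yet $a_1=(1,0)<u$ while $a_2\neq 0$; it contains no unit entries at all, and it is precisely the decomposition of $(2,0)\in G^+$ produced by your own iterative procedure. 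The implication ``$a\oplus b=a$ and $a<1$ imply $b=0$'' holds in totally ordered groups but does not survive passage to subdirect products, so the intended ``unfolding in the unit interval'' breaks down exactly where lattice-orderedness matters. The correct uniqueness argument (the one in the monograph the paper cites) instead shows by a direct lattice computation from $(a_i+a_{i+1})\wedge 1=a_i$ that any good sequence summing to $a$ must have first term $a\wedge 1$, and then inducts on the residual $a-(a\wedge 1)=(a-1)^+$; the verification that your existence procedure actually outputs a good sequence needs the same kind of computation. Until part (2) is repaired along these lines, the uniqueness of $g$ --- and with it the well-definedness of $\epsilon_G$ in part (3) --- is not established.
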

\begin{proof}
See Theorem 2.4.5, Lemma 7.1.5, and Corollary 7.1.6 in \cite{CignoliOttavianoMundici00}.
\end{proof}
A  homomorphism $h\colon M\to N$ of MV-algebras lifts to a function $h^{*}\colon A_{M}\to A_{N}$ upon setting $h^{*}((a_{i})_{i\in \N})\coloneqq(h(a_{i}))_{i\in \N}$. Then Lemma \ref{thm:review} and the universal construction of $\Xi M$ from $A_M$ entail that $h^{*}$ has exactly one extension to a unital $\ell$-homomorphism $\Xi h\colon \Xi M\to \Xi N$. We thereby obtain a functor 
\begin{equation}\label{eq:Xi}
\Xi\colon\MValg\longrightarrow \Au.
\end{equation}
\begin{theorem}[Mundici's equivalence]\label{thm:Mundiciequiv}
The   functors $\Gamma\colon\Au \rightarrow \MValg$ and $\Xi\colon\MValg\rightarrow \Au$  form an equivalence of categories. 
\end{theorem}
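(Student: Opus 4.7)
The plan is to leverage Lemma \ref{thm:review} directly: it already provides, for each MV-algebra $M$ and each unital Abelian $\ell$-group $G$, isomorphisms $\phi_M\colon M\to \Gamma\Xi M$ and $\epsilon_G\colon G\to \Xi\Gamma G$. So all that remains is to verify that the families $\phi=(\phi_M)_{M\in\MValg}$ and $\epsilon=(\epsilon_G)_{G\in\Au}$ are natural transformations, at which point $(\Gamma,\Xi,\epsilon^{-1},\phi)$ witnesses the claimed equivalence.

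For naturality of $\phi$, given a homomorphism $h\colon M\to N$ of MV-algebras, I would chase an element $a\in M$ through the square with sides $h$, $\phi_M$, $\Gamma\Xi h$, and $\phi_N$. By the construction of $\Xi h$ recalled just before equation \eqref{eq:Xi}, the lifted map $h^*$ sends $(a)=(a,0,0,\dots)$ to $(h(a))=(h(a),0,0,\dots)$, so $\Xi h[(a),(0)]=[(h(a)),(0)]$; taking $\Gamma$ does nothing at the level of underlying elements, and the right-hand side equals $\phi_N(h(a))$, as required.

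For naturality of $\epsilon$, given a unital $\ell$-homomorphism $f\colon G\to H$, I would chase $a\in G$. Since $f$ is positive and preserves joins, it carries $a^+$ and $a^-$ to $(f a)^+$ and $(f a)^-$. The key observation is that the good-sequence decomposition $g\colon G^+\to A_{\Gamma G}$ of Lemma \ref{thm:review}(2) is natural in $G$: if $a^+=a_1+\dotsb + a_n$ with $a_i\in \Gamma G$, then $f(a^+)=f(a_1)+\dotsb+f(a_n)$ with $f(a_i)\in \Gamma H$, and this decomposition is the unique good-sequence decomposition of $f(a^+)$ in $\Gamma H$, i.e.\ $g(f(a^+))=(\Gamma f)^*\,g(a^+)$. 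Applying the analogous identity to $a^-$ and using the definition of $\Xi(\Gamma f)$ on equivalence classes $[\ba,\bb]$ gives $\Xi\Gamma f\,\epsilon_G(a)=\epsilon_H(f(a))$.

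The only genuine content in the argument is the naturality of the good-sequence decomposition used in the second square; everything else is a direct unwinding of the definitions of $\Gamma$, $\Xi$, and of the isomorphisms in Lemma \ref{thm:review}. I therefore expect that the main, and essentially only, obstacle is writing this second check cleanly, and after that the theorem follows immediately from the definition of an equivalence of categories.
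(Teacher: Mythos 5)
Your proposal is correct, but note that the paper itself gives no proof of Theorem \ref{thm:Mundiciequiv}: it is recalled from Mundici's work and \cite{CignoliOttavianoMundici00} explicitly ``without proofs'', so there is no in-paper argument to compare against. Your route---isomorphisms from Lemma \ref{thm:review} plus the two naturality checks, with the only substantive point being that a unital $\ell$-homomorphism carries the unique good-sequence decomposition of $a^{+}$ (resp.\ $a^{-}$) to that of $f(a)^{+}$ (resp.\ $f(a)^{-}$)---is the standard one and is exactly the pattern the paper itself follows when it proves the states version in Theorem \ref{thm:generalGamma}.
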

\begin{remark}\label{r:abuseoflang}In what follows we shall often tacitly identify $M$ with  $\Gamma\Xi M\subseteq \Xi M$, and thus speak of functions defined on $M$ having an extension to $\Xi M$, etc.  
\end{remark}
In the rest of this section we lift the equivalence of Theorem \ref{thm:Mundiciequiv} from homomorphisms to states. 
\begin{definition}[States]\label{d:state}For MV-algebras $M$ and $N$, a function $s\colon M\to N$ is a \emph{state} (\emph{of $M$ with values in $N$}) if $s(1)=1$, and for each $a,b\in M$ with $a \odot b=0$ the equality $s(a\oplus b) =s(a)+s(b)$ holds, where $+$ is interpreted in $\Xi N$. For unital Abelian $\ell$-groups $G$ and $H$, a function $s\colon G\to H$ is a \emph{state} (\emph{of $G$ with values in $H$}) if $s(1)=1$, and $s$ is a group homomorphism that is \emph{positive}, i.e., for each $g\in G^+$ we have $s(g)\in H^+$. We write  $\MValgs$ for the category whose objects are MV-algebras and whose morphisms are states, and $\As$ for the category whose objects are  unital Abelian $\ell$-groups and whose morphisms are states.
\end{definition}

States of MV-algebras can be defined in several equivalent ways. For instance, a~function $s\colon M\to N$ satisfying $s(1)=1$ is a state if, and only if, for each $a,b\in M$ with $a \odot b=0$, the equalities $s(a\oplus b) =s(a)\oplus s(b)$ and $s(a)\odot s(b)=0$ hold---see \cite[Proposition 3.3]{Kroupa2018}. In Section \ref{s:statesvar} we will give an equational characterisation of states (Proposition \ref{p:equational}).

\begin{remarks}\label{r:states} (1)\ It is elementary that  requiring the group homomorphism $s\colon G\to H$ to be positive is equivalent to asking that it be order-preserving.

\smallskip \noindent (2)\  States are a classical notion in the theory of partially ordered Abelian groups (see e.g.\ \cite{Goodearl86}), where they are most often assumed to have codomain $\R$. For emphasis, we  refer to the latter states as \emph{real-valued}. A significant example is the Lebesgue integral $\int_{[0,1]}- \,{\rm d}\lambda$ over the unital Abelian $\ell$-group of all continuous functions $[0,1]\to\R$. Let us stress that states do not necessarily preserve infima and suprema and, therefore, in general they are not  morphisms in $\Au$. 

\smallskip \noindent (3)\ States of MV-algebras also are a well-studied notion (see e.g.\ \cite{Mundici11,FlaminioKroupa15}), and they are usually  assumed to have codomain $[0,1]\subseteq \R$. We  refer to the latter states  as \emph{real-valued}. Analogously to the previous item, states of MV-algebras are not morphisms in $\MV$, in general.
\end{remarks}
\begin{lemma}\label{l:extfunctor}Let $M$ and $N$ be MV-algebras, and $G$ and $H$ be unital Abelian $\ell$-groups.
\begin{enumerate}
\item Any state of unital Abelian $\ell$-groups $s\colon G\to H$ restricts to a function $\Gamma(s)\colon\Gamma G\to\Gamma H$ that is a state of MV-algebras.
\item Any state of MV-algebras $s\colon M\to N$ has exactly one extension to a state of unital Abelian $\ell$-groups $\Xi(s)\colon \Xi M\to\Xi N$.
\end{enumerate}
\end{lemma}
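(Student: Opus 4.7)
The plan is to handle part (1) by direct computation and part (2) by constructing the extension through the Grothendieck-style presentation of $\Xi M$ recalled in Section \ref{sec:Gamma}.

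For (1), given a state $s\colon G\to H$ of unital Abelian $\ell$-groups, positivity together with $s(1)=1$ yields $0\leq s(a)\leq 1$ in $H$ whenever $0\leq a\leq 1$ in $G$, so $s$ restricts to $\Gamma(s)\colon \Gamma G\to \Gamma H$. To verify this restriction is an MV-state, I would observe that for $a,b\in\Gamma G$ with $a\odot b=0$, equation \eqref{eq:sum} gives $a+b=a\oplus b$ in $G$; applying $s$ then yields $\Gamma(s)(a\oplus b)=s(a)+s(b)$ in $H$, which, under the identification $H=\Xi\Gamma H$ of Remark \ref{r:abuseoflang}, is precisely the condition of Definition \ref{d:state}. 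Unit preservation is immediate.

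For (2), uniqueness of the extension follows because every $p\in (\Xi M)^+$ is a finite sum of elements of $M\subseteq \Xi M$ by Lemma \ref{thm:review}(2), and every element of $\Xi M$ is a difference of two such positive elements; hence $M$ generates $\Xi M$ as a group, so any unital group homomorphism $\Xi M\to \Xi N$ extending $s$ is forced. For existence, the core technical step is an additivity lemma: if $x=z_1+\cdots+z_n$ in $\Xi M$ with $x$ and all $z_i$ in $M$, then $s(x)=s(z_1)+\cdots+s(z_n)$ in $\Xi N$. I would prove this by induction on $n$: for $n\geq 2$, the partial sum $w\coloneqq z_2+\cdots+z_n$ is non-negative and bounded above by $x\leq 1$, hence lies in $M$, and $x=z_1+w$ with $z_1\odot w=0$ by \eqref{eq:sum}, so the state property gives $s(x)=s(z_1\oplus w)=s(z_1)+s(w)$; the induction hypothesis then handles $s(w)$.

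With this lemma in hand, I would define $\tilde s\colon (\Xi M)^+\to (\Xi N)^+$ by $\tilde s(p)\coloneqq \sum_i s(x_i)$ for any decomposition $p=\sum_i x_i$ with $x_i\in M$. I expect the main obstacle to be well-definedness of $\tilde s$, and the natural tool is the Riesz decomposition property in the $\ell$-group $\Xi M$: given two decompositions $p=\sum_i x_i=\sum_j y_j$, Riesz interpolation produces $z_{ij}\geq 0$ with $x_i=\sum_j z_{ij}$ and $y_j=\sum_i z_{ij}$; since $z_{ij}\leq x_i\leq 1$, all $z_{ij}$ lie in $M$, and the additivity lemma yields $\sum_i s(x_i)=\sum_{i,j}s(z_{ij})=\sum_j s(y_j)$. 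Additivity $\tilde s(p+q)=\tilde s(p)+\tilde s(q)$ follows by concatenating decompositions, so by the universal property of the Grothendieck group, $\tilde s$ extends uniquely to a group homomorphism $\Xi(s)\colon \Xi M\to \Xi N$. Positivity is built into the construction, $\Xi(s)(1)=s(1)=1$, and $\Xi(s)$ restricts to $s$ on $M$, completing the proof.
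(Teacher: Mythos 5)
Your proof is correct. Part (1) matches the paper's argument essentially verbatim: order-preservation gives the restriction to the unit intervals, and \eqref{eq:sum} converts $a\odot b=0$ into $a+b=a\oplus b$ so that the group-homomorphism property of $s$ yields the MV-state condition. For part (2) you diverge from the paper in an instructive way: the paper verifies only that $s\colon\Gamma\Xi M\to\Gamma\Xi N$ is order-preserving and additive on pairs whose sum stays in the unit interval, and then invokes the general extension result of Handelman, Higgs, and Lawrence (Proposition 1.5 of their paper, applicable because every $\ell$-group is directed and has Riesz interpolation). You instead prove that extension result from scratch in the special case at hand: the induction giving $s(x)=\sum_i s(z_i)$ for decompositions $x=\sum_i z_i$ inside the unit interval, the use of the Riesz decomposition property to show that $\tilde s(p)\coloneqq\sum_i s(x_i)$ is independent of the chosen decomposition of $p\in(\Xi M)^+$, and the passage from the additive monoid map on the positive cone to a group homomorphism on $\Xi M=(\Xi M)^+-(\Xi M)^+$ are exactly the ingredients hidden inside the citation. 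Each step you supply is sound (in particular, $z_1\odot w=0$ does follow from $z_1+w\leq 1$ via \eqref{eq:sum}, and Riesz decomposition is available in any $\ell$-group). What the paper's route buys is brevity and a clean separation of the MV-specific content from general ordered-group theory; what your route buys is a self-contained proof that makes the mechanism of the extension visible. Your uniqueness argument (via Lemma \ref{thm:review}(2), $M$ generates $\Xi M$ as a group) is also the standard one and is fine.
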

\begin{proof}To prove the first item, observe that $0\leq a \leq 1$ in $G$ entails $0\leq s(a)\leq 1$ in $H$, because $s$ preserves the order; further, $s(1)=1$ by definition. If $0\leq  a_1,a_2\leq 1$ in $G$, and $a_1\odot a_2 =0$ in the MV-algebra $\Gamma G$, then (\ref{eq:sum}) yields $a_1\oplus a_2=a_2+a_2$.

For the second item, let us regard $s$ as a function $\Gamma\Xi M\to \Gamma\Xi N$. Then $s$ is easily seen to be order-preserving, \cite[Proposition 10.2]{Mundici11}. For $a,b\in  \Gamma\Xi M$ we have $s(a+b)=s(a)+s(b)$ as soon as $a+b$ belongs to $\Gamma\Xi M$. Indeed, the latter happens precisely when $a\odot b=0$ by (\ref{eq:sum}), and then $s(a+b)=s(a\oplus b)=s(a)+s(b)$ by the definition of state. Now existence and uniqueness of $\Xi(s)$ is granted by the general extension result \cite[Proposition 1.5]{HandelmanHiggsLawrence1980}. (The hypotheses of the cited proposition require  $G$ to be directed, and to have the Riesz interpolation property; it is classical and elementary that any $\ell$-group satisfies these properties.) 
\end{proof}
In light of Lemma \ref{l:extfunctor}  we consider the functors
\[
\Gamma\colon \As\longrightarrow \MValgs
\]
and
\[
\Xi\colon \MValgs\longrightarrow\As
\]
that extend the homonymous ones in \eqref{eq:Gamma} and \eqref{eq:Xi}, respectively.

 \begin{theorem}[Mundici's equivalence, for states]\label{thm:generalGamma}
The functors $\Gamma\colon \As\to \MValgs$ and $\Xi\colon \MValgs\longrightarrow\As$ 
 form an equivalence of categories.
 \end{theorem}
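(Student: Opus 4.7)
The plan is to bootstrap the desired equivalence from Mundici's original equivalence (Theorem \ref{thm:Mundiciequiv}) using Lemma \ref{l:extfunctor} together with the natural isomorphisms $\phi_M$ and $\epsilon_G$ from Lemma \ref{thm:review}. Since the object classes of $\MValgs$ and $\As$ coincide with those of $\MValg$ and $\Au$, we only have to (i) verify that $\Gamma$ and $\Xi$ extend to functors on the enlarged categories, and (ii) check that $\phi$ and $\epsilon$ are still natural with respect to states.

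First I would establish functoriality. Lemma \ref{l:extfunctor}(1) already guarantees that $\Gamma$ sends a state $s\colon G\to H$ to a state $\Gamma(s)\colon\Gamma G\to\Gamma H$; preservation of identities and composition is immediate because $\Gamma(s)$ is a restriction of $s$, and the restriction of a composite is the composite of the restrictions. For $\Xi$, Lemma \ref{l:extfunctor}(2) supplies, for every state $s\colon M\to N$ of MV-algebras, a unique extension $\Xi(s)\colon\Xi M\to\Xi N$ to a state of unital $\ell$-groups. Given a further state $t\colon N\to P$, the composition $\Xi(t)\circ\Xi(s)$ is a state of unital $\ell$-groups (since positivity and unit-preservation are clearly closed under composition) which, on the subalgebra $\Gamma\Xi M\cong M$, agrees with $t\circ s$. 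By the uniqueness clause of Lemma \ref{l:extfunctor}(2), $\Xi(t\circ s)=\Xi(t)\circ\Xi(s)$. Identities are preserved analogously.

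Next I would verify naturality of $\phi$ and $\epsilon$ in the larger categories. For $\phi$, given a state $s\colon M\to N$, the naturality square reads $\Gamma\Xi(s)\circ\phi_M=\phi_N\circ s$. Unwinding, for $a\in M$ the left-hand side is $\Gamma\Xi(s)[(a),(0)]=[(\Xi(s)(a)),(0)]=[(s(a)),(0)]$ (using that $\Xi(s)$ extends $s$), while the right-hand side is $[(s(a)),(0)]$. For $\epsilon$, given a state $t\colon G\to H$, I would show $\Xi\Gamma(t)\circ\epsilon_G=\epsilon_H\circ t$: both sides are group homomorphisms $G\to\Xi\Gamma H$, and on $\Gamma G\subseteq G$ they coincide (both restrict to $\Gamma(t)$ followed by the canonical inclusion), so by the decomposition $a=a^+-a^-$ and additivity they agree on all of $G$. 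More slickly, the composite $\epsilon_H\circ t\circ\epsilon_G^{-1}$ is a state of unital $\ell$-groups $\Xi\Gamma G\to\Xi\Gamma H$ extending $\Gamma(t)$, so by uniqueness (Lemma \ref{l:extfunctor}(2) applied to the MV-algebraic state $\Gamma(t)$) it must equal $\Xi\Gamma(t)$.

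Having established that $\phi$ and $\epsilon$ are natural isomorphisms $\id_{\MValgs}\Rightarrow\Gamma\Xi$ and $\id_{\As}\Rightarrow\Xi\Gamma$, the equivalence follows by standard category theory. The only place requiring genuine attention is the naturality of $\epsilon$ on states that do not preserve the lattice structure; my expected main obstacle is precisely confirming that the extension procedure of Lemma \ref{l:extfunctor}(2) commutes with the identification $\epsilon_G$, and the cleanest way to dispatch this is via the uniqueness of extensions rather than a direct computation with good sequences.
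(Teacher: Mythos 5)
Your proposal is correct and follows essentially the same route as the paper: naturality of $\phi$ is checked by unwinding the definitions of $\Gamma$ and $\Xi$ on states, and naturality of $\epsilon$ is obtained by observing that both composites are states $G\to\Xi\Gamma H$ agreeing on the unit interval of $G$, whence they coincide by the uniqueness clause of Lemma \ref{l:extfunctor}(2). The only difference is that you spell out the functoriality of $\Gamma$ and $\Xi$ on the enlarged categories explicitly, which the paper absorbs into the definition of the extended functors via Lemma \ref{l:extfunctor}.
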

 \begin{proof}
The isomorphism $\phi_M \colon M \to \Gamma\Xi M$  from Lemma \ref{thm:review}(1) is natural: if  $s\colon M\to N$ is a state of MV-algebras, the naturality square 
\[
\begin{tikzcd}
M \ar[rr, "\phi_M"]    \ar[d,   "s"'] & {} &  \Gamma\Xi M \ar[d,   "\Gamma\Xi(s)"]\\
N \ar[rr,  "\phi_N"'] & {}    & \Gamma\Xi N
\end{tikzcd}
\]
commutes by the definitions of $\Xi$ and $\Gamma$.

The isomorphism $\epsilon_{G}\colon G\to \Xi\Gamma G$ is also natural: if  $s\colon G\to H$ is a state of unital Abelian $\ell$-groups, the naturality square 
\[
\begin{tikzcd}
G \ar[rr, "\epsilon_G"]    \ar[d,   "s"'] & {} &  \Xi\Gamma G \ar[d,   "\Xi\Gamma(s)"]\\
H \ar[rr,  "\epsilon_H"'] & {}    & \Xi\Gamma H
\end{tikzcd}
\]
commutes. Indeed, $\Xi\Gamma(s)\epsilon_G$ and $\epsilon_Hs$ are states $G\to \Xi\Gamma H$, and it follows from direct inspection of the definitions involved that they  agree on the unit interval of~$G$; but then they agree on the whole of $G$, by (1) and (2) in Lemma~\ref{l:extfunctor}.
 \end{proof}	
 \section{The two-sorted variety of states}\label{s:statesvar}
For  multi-sorted universal algebra see the pioneering \cite{Higgins63, BirkhoffLipson70},  and the textbook reference \cite{AdamekRosickyVitale11}. We are  concerned with the two-sorted case only. Unlike \cite{BirkhoffLipson70}, and like \cite{AdamekRosickyVitale11}, we allow arbitrary multi-sorted sets as carriers of algebras---no non-emptyness requirement is enforced. The difference is immaterial for the present paper, because each of our two sorts has constants. We recall that the product  category $\Sett\coloneqq \Set\times \Set$ of two-sorted sets and two-sorted functions has as objects the ordered pairs $(A,B)$ of sets, and as  morphisms 
\[
f\colon (A_{1},B_{1}) \longrightarrow (A_{2},B_2)
\]
the ordered pairs $f\coloneqq (f_{1},f_{2})$ of functions
\begin{align*}
f_{1}& \colon A_{1} \to A_{2},\\
f_{2}&\colon B_{1} \to B_{2}.
\end{align*}
Composition of morphisms and  identity morphisms are defined componentwise. 
We  consider two  sorts $\mathcal{R}$ and $\mathcal{E}$ of \emph{random variables} and \emph{degrees of expectation}, respectively, and operations as follows.
\begin{itemize}
\item[(T1)] Operations $\oplus\colon \mathcal{R}^2\to\mathcal{R}$, $\neg\colon \mathcal{R}\to \mathcal{R}$, and $0\colon \mathcal{R}^{\emptyset}\to\mathcal{R}$. Thus, these are operations of arities $2$, $1$, and $0$, respectively,  in the  sort of random variables.
\item[(T2)] Operations $\oplus\colon \mathcal{E}^2\to\mathcal{E}$, $\neg\colon \mathcal{E}\to \mathcal{E}$, and $0\colon \mathcal{E}^{\emptyset}\to\mathcal{E}$. Thus, these are operations of arities $2$, $1$, and $0$, respectively,  in the  sort of expectation degrees. They are purposefully denoted by the same symbols as their counterparts in the sort $\mathcal{R}$.
\item[(T3)] One operation $s\colon \mathcal{R}\to \mathcal{E}$ from the sort of random variables to that of degrees of expectation.
\end{itemize}
Items (T1)--(T3) define a two-sorted (similarity) type. For the sake of clarity, let us spell out that a two-sorted function $(m,n)\colon (M_{1},N_{1})\to (M_{2},N_{2})$ is a homomorphism between algebras of this two-sorted  type precisely when
$m\colon M_{1}\to M_{2}$ and $n\colon N_{1}\to N_{2}$ are homomorphisms in the  type of $\mathcal{R}$ and $\mathcal{E}$, respectively, and moreover the square
\begin{equation}\label{eq:diagmn}
\begin{tikzcd}
 M_{1} \ar[rr, "s"]    \ar[d,   "m"'] & {} & N_{1}  \ar[d,   "n"]\\
M_{2} \ar[rr,  "s"'] & {}    & N_{2}
\end{tikzcd}
\end{equation}
commutes.
\begin{definition}[States as two-sorted algebras]\label{d:statesasalgs}
A \emph{state} is an algebra $(M,N)$ of the two-sorted  type (T1)--(T3) such that the following equational conditions hold.
\begin{itemize}
\item[(S1)] $(M,\oplus,\neg,0)$ is an MV-algebra.
\item[(S2)] $(N,\oplus,\neg,0)$ is an MV-algebra.
\item[(S3)] For every $a,b\in M$,  $s\colon M\to N$  satisfies  
\begin{enumerate}[label=(A\arabic*)]
\item\label{A1} $s(a\oplus b)=s(a) \oplus s(b\wedge \neg a)$,
\item\label{A2} $s(\neg a)=\neg s(a)$, and
\item\label{A3} $s(1)=1$.
\end{enumerate}
\end{itemize} 
\end{definition}
\begin{remark}
  The presented axiomatisation (S3) is originally inspired by that of internal states \cite{FlaminioMontagna09}. It was used already in \cite{KroupaMarraSC17} in case of states whose domains are Boolean algebras; see also \cite{Kroupa2018}.
\end{remark}

\noindent Let us emphasise that in the above and throughout we denote a two-sorted algebra simply by its underlying two-sorted set $(M,N)$. All operations---$\oplus$, $s$, and so forth---are tacitly understood. This is in keeping with standard usage in algebra.

 Item (S3) in   Definition \ref{d:statesasalgs} amounts to requiring that  $s\colon M\to N$ be a state:
\begin{proposition}\label{p:equational}Let $s\colon M \to N$ be a function between MV-algebras $M$ and $N$. The following are equivalent.
\begin{enumerate}
\item The function $s$ is a state.
\item  The function $s$ satisfies (S3) in Definition \ref{d:statesasalgs}:

\begin{enumerate}[label=(A\arabic*)]
\item\label{A1} $s(a\oplus b)=s(a) \oplus s(b\wedge \neg a)$,
\item\label{A2} $s(\neg a)=\neg s(a)$,
\item\label{A3} $s(1)=1$.
\end{enumerate}
\end{enumerate}
\end{proposition}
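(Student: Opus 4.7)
The plan is to unfold Definition \ref{d:state}, so that condition (1) becomes the conjunction of $s(1)=1$ and the requirement that $s(a\oplus b)=s(a)+s(b)$ in $\Xi N$ whenever $a\odot b=0$.

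For the direction (1)$\Rightarrow$(2), axiom \ref{A3} is literally part of the definition. For \ref{A2}, I would apply the defining property of states to the pair $(a,\neg a)$, using $a\odot\neg a=0$ and $a\oplus\neg a=1$: this yields $1=s(a)+s(\neg a)$ in $\Xi N$, whence $s(\neg a)=\neg s(a)$ inside $N=\Gamma\Xi N$. For \ref{A1}, the two MV-algebra identities $a\oplus b=a\oplus(b\wedge\neg a)$ and $a\odot(b\wedge\neg a)=0$ do the work. The second is immediate because $b\wedge\neg a\leq\neg a$; the first follows from the distributive law $a\oplus(b\wedge c)=(a\oplus b)\wedge(a\oplus c)$ applied with $c=\neg a$, since $a\oplus\neg a=1$. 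Applying the defining property of states to the pair $(a,b\wedge\neg a)$ then gives $s(a\oplus b)=s(a)+s(b\wedge\neg a)$ in $\Xi N$, and this sum already lies in the unit interval, so it coincides with $s(a)\oplus s(b\wedge\neg a)$ in $N$, proving \ref{A1}.

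For the direction (2)$\Rightarrow$(1), the clause $s(1)=1$ is \ref{A3}. Given $a,b\in M$ with $a\odot b=0$, i.e.\ $b\leq\neg a$, we have $b\wedge\neg a=b$, so \ref{A1} directly yields $s(a\oplus b)=s(a)\oplus s(b)$. The remaining task is to upgrade this $\oplus$-equality to a $+$-equality in $\Xi N$. For this I would first establish monotonicity of $s$ from \ref{A1} alone: for $a\leq c$ in $M$, one writes $c=a\oplus(c\odot\neg a)$, observes $(c\odot\neg a)\wedge\neg a=c\odot\neg a$, and then \ref{A1} yields $s(c)=s(a)\oplus s(c\odot\neg a)\geq s(a)$. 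Monotonicity together with \ref{A2} gives $s(b)\leq s(\neg a)=\neg s(a)$, hence $s(a)\odot s(b)=0$, which by \eqref{eq:sum} promotes $s(a)\oplus s(b)$ to $s(a)+s(b)$ inside $\Xi N$.

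I do not expect any serious technical obstacle. The main subtlety is keeping track of which operations are interpreted in $N$ and which in $\Xi N$, together with the systematic promotion from $\oplus$ to $+$ that requires the disjointness $s(a)\odot s(b)=0$; this last point is precisely why monotonicity must be established before additivity, and why axiom \ref{A2} enters the argument.
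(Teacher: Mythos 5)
Your proposal is correct and follows essentially the same route as the paper: the forward direction via the identities $a\oplus b=a\oplus(b\wedge\neg a)$ and $a\odot(b\wedge\neg a)=0$, and the backward direction by first extracting monotonicity from \ref{A1} and then upgrading $\oplus$ to $+$ via $s(a)\odot s(b)=0$. Your derivation of that last disjointness---$b\leq\neg a$, so $s(b)\leq s(\neg a)=\neg s(a)$ by monotonicity and \ref{A2}, hence $s(a)\odot s(b)=0$---is a slight streamlining of the paper's computation (which instead shows $s(\neg a)\oplus s(\neg b)\geq s(\neg a)\oplus s(\neg b\wedge a)=1$ and applies \ref{A2} at the end), but the ingredients are identical.
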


\begin{proof}
We will need the following equations, valid in all MV-algebras: 
\begin{enumerate}[label=(MV\arabic*)]
  \item \label{MV1} $b\wedge \neg a=b\ominus (a\odot b)$,
  \item \label{MV2}	$a\oplus b=a\oplus (b\wedge \neg a)$,
  \item\label{MV3}	$a\odot (b\wedge \neg a)=0$.
  \end{enumerate}  
In detail, \ref{MV1} holds by the definition of $\wedge$, since
  \[
  b\wedge \neg a =b\odot (\neg b \oplus \neg a)=b\odot \neg (a\odot b)=b\ominus (a\odot b).
  \]
  The operation~$\oplus$ distributes over $\wedge$ by \cite[Proposition 1.1.6]{CignoliOttavianoMundici00}, so
  \[
  a\oplus (b\wedge \neg a) = (a\oplus b) \wedge (a\oplus \neg a)=a\oplus b,
  \]
  which proves \ref{MV2}. Finally, for \ref{MV3},
  \[
  a\odot (b\wedge \neg a)=a\odot (b\ominus (a\odot b))=(a\odot b) \odot \neg (a\odot b)=0.
  \]

  Assume $s$ is a state. Then \ref{A3} holds by definition. Further, \ref{MV2}, \ref{MV3}, and the definition of state yield
  \[
    s(a\oplus b)=s(a\oplus (b\wedge \neg a))=s(a) + s(b\wedge \neg a).
  \]
  Then $s(a) + s(b\wedge \neg a)\in N$, and so the  $+$ above agrees in fact with $\oplus$ by (\ref{eq:sum}). Therefore, \ref{A1} holds. Finally, since $a\odot \neg a =0$, we can write
  \[
    1=s(a\oplus \neg a)=s(a) + s(\neg a),
  \]
  which proves \ref{A2} because $1-s(a)$ equals $\neg s(a)$ in $\Xi N$ . Thus, $s$ satisfies \ref{A1}--\ref{A3}.

  Conversely, assume   $s\colon M \to N$ has properties \ref{A1}--\ref{A3}. We first prove that $s$ is order-preserving. Assume $a\leq b$, or equivalently, by definition, $b=a\oplus (b\ominus a)$. Then \ref{A1} yields
  \[
    s(b)=s(a\oplus (b\ominus a))=s(a)\oplus s((b\ominus a) \wedge \neg a) \geq s(a),
 \]
 where the last inequality follows from monotonicity of $\oplus$ in each coordinate \cite[Lemma 1.1.4]{CignoliOttavianoMundici00}.

  Let $a\odot b=0$. Then \ref{MV1} gives $b\wedge \neg a=b$ and, by \ref{A1},
  \[
    s(a\oplus b)=s(a)\oplus s(b).\]
    It remains to check that $s(a)\odot s(b)=0$, which implies $s(a)\oplus s(b)=s(a)+s(b)$ as is to be shown. Employing \ref{A1}, \ref{A3}, and the assumption $a\odot b=0$, we get
    \[
     s(\neg a)\oplus s(\neg b \wedge a)= s(\neg a \oplus \neg b)=s(\neg (a\odot b))=s(\neg  0)=1.
    \]
    Since $s$ is order-preserving, and since $\oplus$ is monotone in each coordinate by \cite[Lemma 1.1.4]{CignoliOttavianoMundici00}, we infer $s(\neg a) \oplus s(\neg b)\geq s(\neg a)\oplus s(\neg b \wedge a)=1$. Then by \ref{A2} this gives
    \[
      s(a)\odot s(b)=\neg (s(\neg a) \oplus s(\neg b))=0,
    \]
    which completes the proof.
\end{proof}

We consider the category $\P$ (for ``Equational States'') of states in the sense of Definition \ref{d:statesasalgs}, and their homomorphisms. By its very definition, $\P$ is the category of models in $\Set$ of a (finitely axiomatised) two-sorted equational theory. Thus, $\P$ is a two-sorted variety---i.e., it is closed under homomorphic images, subalgebras, and products inside the category of all algebras of the type (T1)--(T3)---by the easy implication in Birkhoff's Variety Theorem. (See \cite{AdamekRosickyVitale12} for details on Birkhoff's Theorem in the multi-sorted setting.) Let us recall that ``homomorphic images'' here are the codomains of those homomorphisms that are surjective in each sort, and that these are exactly the regular epimorphisms \cite[Corollary 3.5]{AdamekRosickyVitale11}. 

 We further consider the category $\S$ of states whose objects are all states $G\to H$  of unital Abelian $\ell$-groups $G$ with values in any unital Abelian $\ell$-groups $H$, and whose morphisms are pairs of unital $\ell$-homomorphisms $G_1\to G_2$ and $H_1\to H_2$ making the obvious square commute. Thus, the objects of $\S$ are exactly the arrows $G\to H$ in $\As$.  We define a functor
\[\Gamma^2\colon\S\longrightarrow \P\]
by setting $\Gamma^2(G\to H)\coloneqq (\Gamma G,\Gamma H)$, for  $G\to H$ an object of $\S$, where the operation $s\colon \Gamma G\to\Gamma H$ of the two-sorted algebra $(\Gamma G,\Gamma H)$  is defined as the restriction of the given state $G\to H$ (cf.\ item (1) in Lemma \ref{l:extfunctor}). Concerning arrows, given the morphism $(g,h)\colon (G_1\to H_1)\to (G_2\to H_2)$ in $\S$---i.e., given the pair of unital $\ell$-homomorphisms $g\colon G_1\to G_2$ and $h\colon H_1\to H_2$ that form the required commutative square with the states $G_i\to H_i$, $i=1,2$---we set $\Gamma^2(g,h)\coloneqq (\Gamma g,\Gamma h)$, which is evidently  a morphism in $\P$. We also define a functor
\[\Xi^2\colon\P\longrightarrow \S\]
by setting $\Xi^2(M,N)$ to be the state $\Xi s\colon \Xi M \to \Xi N$ (cf.\ item (2) in Lemma \ref{l:extfunctor}), where $s\colon M\to N$ is the two-sorted operation of $(M,N)$. For a  morphism $(m,n)$ from $(M_1,N_1)$ to $(M_2,N_2)$, we let $\Xi^2(m,n)\coloneqq (\Xi m,\Xi n)$; this is a morphism in $\S$ because $\Xi$ is a functor. 
\begin{theorem}[Equational characterisation of states]\label{t:eqcharofstates}
	The functors  $\Gamma^2\colon\S\rightarrow \P$ and $\Xi^2\colon\P\rightarrow\S$ form an equivalence of categories.
\end{theorem}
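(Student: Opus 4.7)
The plan is to lift the one-sorted equivalence of Theorem \ref{thm:generalGamma} componentwise to the two-sorted categories $\P$ and $\S$. First, I would check that $\Gamma^2$ and $\Xi^2$ are well-defined functors. For $\Gamma^2$, given an object $s\colon G\to H$ of $\S$, Lemma \ref{l:extfunctor}(1) says that the restriction $\Gamma s\colon \Gamma G\to \Gamma H$ is a state of MV-algebras, so by Proposition \ref{p:equational} the pair $(\Gamma G,\Gamma H)$ together with $\Gamma s$ satisfies (S1)--(S3) of Definition \ref{d:statesasalgs}; functoriality on morphisms is immediate from functoriality of $\Gamma$. Dually, for $\Xi^2$, given $(M,N)$ in $\P$ whose two-sorted operation $s$ is (by Proposition \ref{p:equational}) a state of MV-algebras, Lemma \ref{l:extfunctor}(2) provides the unique extension $\Xi s\colon \Xi M\to \Xi N$ to a state of unital Abelian $\ell$-groups, and $\Xi^2(m,n)=(\Xi m,\Xi n)$ is a morphism of $\S$ because $\Xi$ is a functor.

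Next I would produce the unit and counit as the componentwise pairings of the isomorphisms from Lemma \ref{thm:review}. Define
\[
\Phi_{(M,N)}\coloneqq (\phi_M,\phi_N)\colon (M,N)\longrightarrow \Gamma^2\Xi^2(M,N)=(\Gamma\Xi M,\Gamma\Xi N),
\]
and
\[
E_{s\colon G\to H}\coloneqq (\epsilon_G,\epsilon_H)\colon (G\to H)\longrightarrow \Xi^2\Gamma^2(G\to H)=(\Xi\Gamma G\to\Xi\Gamma H).
\]
Both components of each pair are isomorphisms in the appropriate one-sorted category by Lemma \ref{thm:review}(1) and (3). To see that $\Phi_{(M,N)}$ is a morphism of $\P$ one must verify commutativity of the square
\[
\begin{tikzcd}
M \ar[r,"\phi_M"] \ar[d,"s"'] & \Gamma\Xi M \ar[d,"\Gamma\Xi s"] \\
N \ar[r,"\phi_N"'] & \Gamma\Xi N
\end{tikzcd}
\]
and this is precisely the naturality square for $\phi$ on the morphism $s$ of $\MValgs$, established in the proof of Theorem \ref{thm:generalGamma}. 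Analogously, $E_{s\colon G\to H}$ is a morphism of $\S$ by the naturality square for $\epsilon$ applied to $s$ in $\As$, also proved in Theorem \ref{thm:generalGamma}. Naturality of the collection $\Phi$ with respect to morphisms $(m,n)$ of $\P$, and of $E$ with respect to morphisms $(g,h)$ of $\S$, reduces in each coordinate to naturality of $\phi$ and of $\epsilon$, both already available from Theorem \ref{thm:generalGamma}. Since every component of $\Phi$ and $E$ is an isomorphism, they are natural isomorphisms, establishing the equivalence.

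The main obstacle is essentially bookkeeping: one must recognise that the extra square built into the definition of morphisms in $\P$ (see \eqref{eq:diagmn}) and into the definition of morphisms in $\S$ coincides exactly with the naturality square demanded of $\phi$ and $\epsilon$ by Theorem \ref{thm:generalGamma}. Once this translation of notation is carried out, no fresh computation is required; the two-sorted statement is a formal packaging of the one-sorted equivalence.
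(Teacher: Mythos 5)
Your proposal is correct and follows essentially the same route as the paper: both take the componentwise pairings $(\phi_M,\phi_N)$ and $(\epsilon_G,\epsilon_H)$ of the isomorphisms from Lemma \ref{thm:review} as the (co)unit, observe that the two-sorted morphism condition is exactly the naturality square already established in Theorem \ref{thm:generalGamma}, and conclude that naturality and invertibility are verified sort by sort. The only difference is that you spell out the well-definedness of $\Gamma^2$ and $\Xi^2$, which the paper handles in the discussion preceding the theorem.
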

\begin{proof}The morphism $(\phi_{M},\phi_{N})\colon (M,N) \to \Gamma^2\Xi^2(M,N)$  is an isomorphism in $\P$, where  the components of $(\phi_{M},\phi_{N})$ are as in Lemma \ref{thm:review} and thus are isomorphisms; naturality is verified componentwise, and thus follows at once from Theorem \ref{thm:generalGamma}.
Similarly, for every state $G\to H$ in $\As$,  $(\epsilon_{G},\epsilon_{H})\colon (G\to H)\to \Xi^2\Gamma^2(G,H)$ is an~isomorphism, where $\epsilon_{G}$ and $\epsilon_{H}$ are as in Lemma \ref{thm:review}, and naturality reduces to naturality in each sort, which  holds by Theorem \ref{thm:generalGamma}.
\end{proof}
\section{Free and universal states}\label{s:freestates}
For a set $I$, let us write $FI$ for the free MV-algebra generated by $I$, and  
\begin{align}\label{eq:freeMVunit}
\iota_I\colon I\to F I
\end{align}
for the ``inclusion of free generators'', i.e., for the component at $I$ of the unit of the free/underlying-set adjunction $F\dashv |-|$. 

By general algebraic considerations, the  functor
\[
|-|_2\colon\P \longrightarrow\Sett
\]
that takes a state of MV-algebras to its carrier two-sorted set has a left adjoint
\[
F^2\colon \Sett \longrightarrow \P .
\]
The existence of this  left adjoint has nothing to do with MV-algebras specifically, and is entailed by the existence of free algebras in varieties of multi-sorted algebras. The original reference for this latter result seems to be \cite[Section 5]{Higgins63}, with a~generalisation established in  \cite[Section 7]{BirkhoffLipson70}.
For a two-sorted set $S$, write $\eta_ S\colon S \to F^2S$ for the component at $S$ of the unit of the adjunction $F^2\dashv |-|_2$.   Then $\eta_S$ is characterised as the essentially unique two-sorted function $S \to F^2S$ such that, for any two-sorted function $f\colon S \to (M,N)$ with a state in the codomain, there is exactly one morphism $h\colon F^2S\to (M,N)$ in $\P$ making the diagram 
\[
\begin{tikzcd}
S \ar[r, "\eta_S"]    \ar[dr,   "f"'] &  F^2S \ar[d,  dashed,  "h"]\\
 &  (M,N) 
\end{tikzcd}
\]
commute. In algebraic parlance, $F^2S$ is a  state freely generated by the two-sorted set $S$; it is evidently unique to within a unique isomorphism, and therefore  any state satisfying the preceding  universal property will  be called \emph{the free state generated by~$S$}.

\begin{lemma}\label{l:freeS1} Consider the two-sorted set $(\emptyset,S_2)$, where $S_2$ is any set. Writing  $\2$ for the initial MV-algebra (=two-element Boolean algebra),  $(\2 ,FS_{2})$ is the free state in $\P$ generated by $(\emptyset,S_2)$, where  the operation $s$ is the unique homomorphism $\2\to FS_2$.
\end{lemma}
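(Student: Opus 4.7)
My plan is to verify the universal property of $(\mathbf{2},FS_{2})$ directly, decomposing it into the separate universal properties of the free MV-algebras appearing in each sort.

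First I would identify the candidate unit $\eta_{(\emptyset,S_{2})}\colon (\emptyset,S_{2})\to (\mathbf{2},FS_{2})$: on the first coordinate this must be the empty function $\emptyset\to \mathbf{2}$, and on the second coordinate I take it to be the free-generator inclusion $\iota_{S_{2}}\colon S_{2}\to FS_{2}$ of \eqref{eq:freeMVunit}. I would then check that $(\mathbf{2},FS_{2})$ is well-defined as an object of $\P$: the two MV-algebra axioms (S1)--(S2) are immediate, while (S3) reduces to saying that $s\colon \mathbf{2}\to FS_{2}$ is a state. Since any MV-algebra homomorphism preserves $\oplus$, $\neg$, and $0$, it satisfies \ref{A1}--\ref{A3} (and in fact is trivially a state in the sense of Definition~\ref{d:state}), so this is automatic; note also that $s$ is well-defined and unique because $\mathbf{2}$ is the initial MV-algebra.

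Next, given an arbitrary state $(M',N')\in\P$ with two-sorted operation $s'\colon M'\to N'$ and a two-sorted function $f=(f_{1},f_{2})\colon (\emptyset,S_{2})\to (M',N')$, I would construct the unique morphism $h=(h_{1},h_{2})\colon (\mathbf{2},FS_{2})\to (M',N')$ extending $f$ as follows. The component $f_{1}$ is forced to be the empty function, so $h_{1}\colon \mathbf{2}\to M'$ is forced to be the unique MV-algebra homomorphism out of the initial object $\mathbf{2}$. The component $f_{2}\colon S_{2}\to N'$ determines, by the universal property of $FS_{2}$, a unique MV-algebra homomorphism $h_{2}\colon FS_{2}\to N'$ with $h_{2}\circ \iota_{S_{2}}=f_{2}$. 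By construction $h\circ \eta_{(\emptyset,S_{2})}=f$, and the components $h_{1},h_{2}$ are uniquely determined by the universal properties just invoked.

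The only nontrivial point is that $(h_{1},h_{2})$ is actually a morphism in $\P$, i.e.\ that the square \eqref{eq:diagmn} commutes: $s'\circ h_{1}=h_{2}\circ s$. But both sides are MV-algebra homomorphisms $\mathbf{2}\to N'$, and $\mathbf{2}$ is initial in $\MValg$, so they coincide. This is the one step where the specific shape of the two-sorted set $(\emptyset,S_{2})$ does all the work: the emptiness of the first sort collapses the compatibility square to a statement about morphisms out of an initial object, obviating any further calculation. I would therefore expect no genuine obstacle; the argument is a clean application of the universal properties of $\mathbf{2}$ and $FS_{2}$ together with the triviality of the empty coordinate.
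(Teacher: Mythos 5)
Your proof is correct and follows essentially the same route as the paper: build $(h_1,h_2)$ sort by sort from the universal properties of $\mathbf{2}$ and $FS_2$, and dispose of the compatibility square via initiality of $\mathbf{2}$. The only cosmetic difference is that the paper invokes initiality of $\mathbf{2}$ in $\MValgs$ (both composites around the square are states out of the initial object), whereas you use initiality in $\MValg$, which needs the easy extra remark that $s'\circ h_1$ is indeed a homomorphism since it sends $0\mapsto 0$ and $1\mapsto 1$ and $\mathbf{2}=\{0,1\}$.
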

\begin{proof}Given  a two-sorted function $(f_!,f_2)\colon(\emptyset,S_2)\to (M,N)$, where $f_!\colon\emptyset \to M$ is the unique function from $\emptyset$ to $M$, write   $h_!\colon\2\to M$ for the unique homomorphism, and $h_2\colon FS_2\to N$ for the unique homomorphism  such that $f_2=h_2\iota_{S_2}$, where $\iota_{S_2}$ is as in (\ref{eq:freeMVunit}). Further, consider the component-wise inclusion  $\iota\colon (\emptyset,S_2)\subseteq (\2,FS_2)$. Then $(h_!,h_2)\colon (\2,FS_{2})\to (M,N)$ is a morphism in $\P$, because $\2$ is initial in $\MValgs$. Also, $(f_!,f_2)=(h_!,h_2)\iota$, and $(h_!,h_2)$ is clearly unique with this property.
\end{proof}
If $M$ is any MV-algebra, a state
\[
\upsilon_M\colon M\longrightarrow \Upsilon M
\]
is said to be universal (for $M$) if for each state $s\colon M\to N$ there is exactly one homomorphism of MV-algebras $h\colon \Upsilon M\to N$ satisfying $h\upsilon_M=s$. Universal states, when they exist, are evidently unique to within a unique isomorphism. Any state satisfying the preceding universal property will therefore be called \emph{the universal state} (\emph{of $M$}).
\begin{lemma}\label{l:freeS2}Consider the two-sorted set $(S_1,\emptyset)$, where $S_1$ is any set.
Then the free state $F^2(S_1,\emptyset)$ generated by $(S_1,\emptyset)$ is (isomorphic in $\P$ to) 
\[
(FS_{1},\Upsilon F S_1),
\]
where the operation $s$ is the universal state $\upsilon_{FS_1}$ of $FS_1$.
\end{lemma}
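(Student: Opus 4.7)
The plan is to show that the free state $F^2(S_1, \emptyset)$ in $\P$, whose existence is guaranteed by general multi-sorted algebra, has the claimed form. Write $(A, B, s_A)$ for this free state, with unit two-sorted function $(\eta_1, \eta_!)\colon (S_1, \emptyset)\to(A, B)$. I will establish (i) that $(A, \eta_1)$ is the free MV-algebra on $S_1$, and (ii) that the state $s_A\colon A\to B$ is universal in the sense of the definition preceding the lemma. Together, (i) and (ii) identify $(A, B, s_A)$ with $(FS_1, \Upsilon FS_1, \upsilon_{FS_1})$ up to a unique isomorphism in $\P$, and in particular establish the existence of $\upsilon_{FS_1}$.

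For step (i), I first show $A$ is generated as an MV-algebra by $\eta_1(S_1)$. Let $(A', B')\subseteq(A, B)$ denote the sub-two-sorted algebra of $(A, B)$ in $\P$ generated by $(\eta_1(S_1), \emptyset)$. The universal property of $F^2$ applied to $(\eta_1, \eta_!)$ regarded as landing in $(A', B')$ yields a unique morphism $\phi\colon (A, B)\to(A', B')$ in $\P$ extending this datum. Composing with the inclusion $(A', B')\hookrightarrow(A, B)$ produces a $\P$-endomorphism of $(A, B)$ extending $(\eta_1, \eta_!)$, which must be the identity by uniqueness; hence $(A', B')=(A, B)$. For the universal property of $(A, \eta_1)$ as a free MV-algebra on $S_1$, given any $f\colon S_1\to M$ with $M$ an MV-algebra, apply the universal property of $F^2$ to the state $(M, M, \id_M)$ and the two-sorted function $(f, \emptyset)\colon (S_1, \emptyset) \to (M,M)$: the first component of the resulting morphism in $\P$ is a homomorphism $A\to M$ extending $f$, and uniqueness follows at once from the preceding generation claim. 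Hence $A\cong FS_1$ and $\eta_1$ corresponds to $\iota_{S_1}$.

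For step (ii), let $t\colon FS_1\to N$ be an arbitrary state. Then $(FS_1, N, t)$ is an object of $\P$, and the two-sorted function $(\iota_{S_1}, \emptyset)\colon (S_1, \emptyset)\to(FS_1, N)$ extends to a unique morphism $(h_1, h_2)\colon (FS_1, B)\to(FS_1, N)$ in $\P$ by the universal property of $F^2$. Since $h_1\circ \iota_{S_1}=\iota_{S_1}$, freeness of $FS_1$ forces $h_1=\id_{FS_1}$; the commuting square \eqref{eq:diagmn} characterising morphisms in $\P$ then reduces to $h_2\circ s_A=t$. Uniqueness of $h_2$ subject to this equation follows because any other homomorphism $h_2'\colon B \to N$ with $h_2'\circ s_A=t$ would yield a morphism $(\id_{FS_1}, h_2')$ in $\P$ distinct from $(\id_{FS_1}, h_2)$ yet extending $(\iota_{S_1}, \emptyset)$. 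This is precisely the defining property of the universal state, yielding $s_A=\upsilon_{FS_1}$ and $B=\Upsilon FS_1$.

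The principal obstacle lies in the uniqueness assertion of step (i): the straightforward test against $(M, M, \id_M)$ secures \emph{existence} of the homomorphism extending $f$, but does not directly give uniqueness, since there could in principle be homomorphisms $A\to M$ extending $f$ that do not underlie any $\P$-morphism $(A,B)\to(M,M)$ because they fail to factor through $s_A$. This is circumvented by the preliminary generation claim, which reduces uniqueness of MV-algebra homomorphisms out of $A$ to agreement on generators.
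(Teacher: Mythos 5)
Your proof is correct, and its second half (identifying $s_A$ as the universal state of $FS_1$) coincides with the paper's argument essentially verbatim. Where you diverge is in establishing that $A$ is the free MV-algebra on $S_1$. You test $F^2(S_1,\emptyset)$ against the object $(M,M,\id_M)$, which---as you rightly observe---yields only \emph{existence} of a homomorphism $A\to M$ extending a given $f\colon S_1\to M$, and you then supply uniqueness via a separate generation claim ($A$ is generated by $\eta_1(S_1)$, proved by the standard retraction-onto-the-generated-subalgebra argument, which is legitimate here because $\P$ is a variety and hence closed under subalgebras). The paper instead tests against $(M,\mathbbm{1})$, where $\mathbbm{1}$ is the terminal MV-algebra: since the second component of any $\P$-morphism into $(M,\mathbbm{1})$ is forced, the uniqueness clause of the universal property of $F^2(S_1,\emptyset)$ transfers directly to uniqueness of the first component $h_1\colon A\to M$, with no generation lemma needed. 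Your route costs an extra (routine) lemma but records the useful fact that $A$ is generated by the image of the unit; the paper's choice of test object makes that detour unnecessary. Both arguments are sound.
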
 
\begin{proof}Let us display the components of $F^2(S_1,\emptyset)$ as the pair of MV-algebras $(A,B)$. If $M$ is any MV-algebra and $\mathbbm{1}$ is the terminal (=one-element) MV-algebra, we  consider the object $(M,\mathbbm{1})$ of $\P$ whose operation $s$ is the unique homomorphism $M\to \mathbbm{1}$.  Given any function $f\colon S_1\to M$, the unique two-sorted function $(f,!)\colon (S_1,\emptyset)\to (M,\mathbbm{1})$ whose first component is $f$ (and whose second component, necessarily, is the only possible function $!\colon\emptyset \to \mathbbm{1}$) has exactly one extension to a homomorphism $(h_1,!)\colon (A,B)\to (M,\mathbbm{1})$. The second component of this homomorphism is the only possible one $!\colon B\to \mathbbm{1}$. Then $h_1\colon A\to M$ is a homomorphism that extends $f$, and it is the unique such: if $h_1'\colon A\to M$ extends $f$ then $(h_1',!)\colon (A,B)\to(M,\mathbbm{1})$ extends $(f,!)$, and thus $h_1'=h_1$ by the uniqueness of~$(h_1,!)$. This shows that $A$ is $FS_1$. In the rest of this proof we write $FS_1$ and drop~$A$.

Let us next consider the operation $s\colon FS_1\to B$ of the object $(FS_1,B)$, with the intent of proving that the state $s$ is universal for $FS_1$. Consider any state $t\colon FS_1\to N$, and the corresponding object $(FS_1,N)$ of $\P$. Writing $\iota\colon S_1\to FS_1$ for the insertion of free generators, we consider the two-sorted function $(\iota,!)\colon (S_1,\emptyset)\to (FS_1,N)$. The universal property of $(FS_1,B)$ yields exactly one homomorphism $(1_{FS_1},h_2)\colon (FS_1,B)\to (FS_1,N)$ extending $(\iota,!)$, the first component of which is, necessarily, the identity $1_{FS_1}\colon FS_1\to FS_1$. The second component $h_2$ therefore satisfies $h_2s=t$. Further, $h_2$ is the only homomorphism with this property. Indeed, if $h'_2\colon B \to N$ satisfies $h'_2s=t$, then $(1_{FS_1},h'_2)\colon (FS_1,B)\to (FS_1,N)$ extends $(\iota,!)$ and so $h'_2=h_2$ by the universal property of $(FS_1,B)$. In conclusion, $s\colon FS_1\to B$ is the universal  state $\upsilon_{F S_1}$ of $FS_1$, and therefore $B$ is $\Upsilon F S_1$. This completes the proof.
\end{proof}
Lemma \ref{l:freeS1} establishes the existence of the universal state of a free MV-algebra. For the general case, we take a quotient. If $f\colon A\to B$ is a function, we write $\Ker{f}\coloneqq\{(x,y)\in A^2\mid fx=fy\}$. This notation is opposed to $\ker{f}\coloneqq f^{-1}\{0\}$, which we also use later on in the paper and which makes sense when $A$ and $B$ are either MV-algebras or lattice-groups. 

\begin{corollary}\label{c:existenceuniv}Every MV-algebra has a universal state.
\end{corollary}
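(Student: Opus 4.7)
The plan is to reduce the corollary to the case of free MV-algebras---already covered by Lemma~\ref{l:freeS2}---and then pass to a quotient inside the two-sorted variety~$\P$. Let $M$ be an MV-algebra, and let $\pi\colon F|M|\twoheadrightarrow M$ be the canonical surjection (the counit of $F\dashv |-|$ at $M$). By Lemma~\ref{l:freeS2} applied to the set~$|M|$, the pair $(F|M|,\Upsilon F|M|)$, equipped with the universal state $\upsilon_{F|M|}\colon F|M|\to \Upsilon F|M|$ of $F|M|$, is an object of $\P$. I would then form the MV-congruence $\theta$ on $\Upsilon F|M|$ generated by the set
\[
\{(\upsilon_{F|M|}(x),\upsilon_{F|M|}(y))\mid x,y\in F|M|,\ \pi(x)=\pi(y)\},
\]
set $\Upsilon M\coloneqq \Upsilon F|M|/\theta$ with quotient map $q\colon \Upsilon F|M|\twoheadrightarrow \Upsilon M$, and define $\upsilon_M\colon M\to \Upsilon M$ by $\upsilon_M(\pi x)\coloneqq q(\upsilon_{F|M|}(x))$. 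The assignment is well-defined by the very choice of~$\theta$, and axioms (A1)--(A3) of Proposition~\ref{p:equational} transfer from $\upsilon_{F|M|}$ to $\upsilon_M$ because $\pi$ is a surjective MV-homomorphism and $q$ is an MV-homomorphism; hence $\upsilon_M$ is a state.

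For the universal property, given any state $t\colon M\to N$, the composition $t\pi\colon F|M|\to N$ is again a state, so universality of $\upsilon_{F|M|}$ supplies a unique MV-homomorphism $h\colon \Upsilon F|M|\to N$ with $h\upsilon_{F|M|}=t\pi$. The generators of $\theta$ are collapsed by~$h$: indeed, $h(\upsilon_{F|M|}(x))=t\pi(x)=t\pi(y)=h(\upsilon_{F|M|}(y))$ whenever $\pi(x)=\pi(y)$. Hence $h$ factors uniquely as $h=\bar h\circ q$ for some MV-homomorphism $\bar h\colon \Upsilon M\to N$. Using $\upsilon_M\pi=q\upsilon_{F|M|}$ and the surjectivity of~$\pi$ one obtains $\bar h\upsilon_M=t$, and the surjectivity of~$q$ together with the uniqueness of~$h$ gives the uniqueness of~$\bar h$.

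The only subtlety I would watch out for is the multi-sorted bookkeeping: one is implicitly forming the quotient in $\P$ of $(F|M|,\Upsilon F|M|)$ by the smallest two-sorted congruence whose first component contains $\Ker\pi$, and one must check that this congruence's first component is exactly $\Ker\pi$---so that the quotient's first component really is $M$---and its second component is exactly~$\theta$. This holds because the only cross-sort operation in the signature (T1)--(T3) is $s\colon \mathcal{R}\to\mathcal{E}$, which propagates identifications only from $\mathcal{R}$ to~$\mathcal{E}$; no feedback can enlarge the congruence on~$\mathcal{R}$ beyond~$\Ker\pi$.
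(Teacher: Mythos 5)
Your proposal is correct and follows essentially the same route as the paper: present $M$ as a quotient of the free MV-algebra on its underlying set, invoke Lemma~\ref{l:freeS2} for the universal state of that free algebra, pass to the quotient of $\Upsilon F|M|$ by the congruence generated by the $\upsilon$-images of the kernel pairs, and verify the universal property exactly as you do. Even your closing remark on the two-sorted bookkeeping---that the generated congruence has first component exactly $\Ker\pi$---mirrors the paper's observation that $(\theta_1,\theta_2)$ is the congruence generated by $(\theta_1,\emptyset)$.
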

\begin{proof}Let $M$ be any MV-algebra, write $FM$ for the free MV-algebra generated by the set  $M$, and let $q_1\colon FM\to M$ be the unique, automatically surjective homomorphism extending the identity function $M\to M$. By   Lemma \ref{l:freeS2}, $F^2(M,\emptyset)$ is $
(FM,\Upsilon F M)$, 
where the operation $s$ is the universal state $\upsilon_{FM}$ of $FM$. In the rest of this proof we write $\upsilon$, {\it tout court}.
 Set $\theta_1=\Ker{q_1}$ and $\theta_2=\langle\{(\upsilon x, \upsilon y) \mid (x,y)\in \theta_1\}\rangle$, where $\langle - \rangle$ denotes the congruence generated by $-$ on $\Upsilon F M$. Then $(\theta_1,\theta_2)$ is evidently a congruence on $(FM,\Upsilon F M)$, and it is the congruence generated by the  two-sorted relation $(\theta_1,\emptyset)$---indeed, any congruence on  $(FM,\Upsilon F M)$ that contains $(\theta_1,\emptyset)$ must contain $(\theta_1,\{(\upsilon x, \upsilon y) \mid (x,y)\in \theta_1\})$, by the compatibility with $\upsilon$, and thus must contain $(\theta_1,\theta_2)$.
 
 We consider the quotient state $(M,\frac{\Upsilon FM}{\theta_2})$ of $(FM,\Upsilon FM)$ modulo the congruence $(\theta_1,\theta_2)$, with operation $M\to \frac{\Upsilon FM}{\theta_2}$ denoted~$s$,
 \[
\begin{tikzcd}[row sep=large]
 FM \ar[r, "\upsilon"] \ar[d, "q_1"']&   \Upsilon FM\ar[d, "q_2"] \ar[dd, "k", bend  left, dashed]\\
M \ar[d, "t"']    \ar[r,   "s"] & \frac{\Upsilon FM}{\theta_2} \ar[d,  "h", dashed]\\
N \ar[r, "1_N"']&   N
\end{tikzcd}
\]
and we verify that  $s$ is the universal state of $M$. (We write $q_2\colon \Upsilon FM\to\frac{\Upsilon FM}{\theta_2}$ for the natural quotient map.) For this, let $t\colon M \to N$ be any state. The composite $tq_1\colon FM\to  N$ is a state, too. Since $\upsilon$ is universal for $FM$, there is exactly one homomorphism $k\colon \Upsilon F M\to N$ such that $k\upsilon=tq_1$. Let us show $\Ker{q_2}\seq \Ker{k}$. Since $\Ker{q_2}$ is $\theta_2$, it suffices to show that the inclusion holds for $\{(\upsilon x, \upsilon y) \mid (x,y)\in \theta_1\}$, because the latter is a generating set of $\theta_2$. Given $(\upsilon x, \upsilon y)$ with $(x,y)\in \theta_1$, from $q_1x=q_1y$ we obtain $tq_1x=tq_1y$, and thus $k\upsilon x = k\upsilon y$, that is $(\upsilon x,\upsilon y)\in \Ker{k}$, as we intended to show. By the universal property of quotients (applied to MV-algebras), there is exactly one homomorphism $h\colon  \frac{\Upsilon FM}{\theta_2} \to N$ with $hq_2=k$. This homomorphism satisfies $hs=t$. Indeed, from $hq_2=k$ and $k \upsilon = tq_1$ we obtain $hq_2\upsilon=tq_1$, and therefore $hsq_1=tq_1$; but $q_1$ is epic, whence $hs=t$. Finally, if $h'$ is any homomorphism  that satisfies $h's=t$, then $h'sq_1=tq_1$ and so $h'q_2\upsilon=tq_1$; applying the uniqueness property of $k$, we infer $h'q_2=k$, and applying that of $h$ we conclude $h=h'$. This completes the proof that $s$ is universal for $M$. 
\end{proof}
Corollary \ref{c:existenceuniv} entails at once by elementary category theory (see e.g.\ \cite[Theorem 2 in Chapter IV]{Mac1998}) that the faithful, non-full inclusion functor
\[
|-|\colon\MValg \longrightarrow\MValgs
\] 
 has a left adjoint
\begin{align}\label{eq:upsilon}
\Upsilon \colon \MValgs \longrightarrow \MValg\,.
\end{align}
 Thus, $\Upsilon M$ is the MV-algebra freely generated by $|M|$.
\begin{remark}The faithful, non-full inclusion of $\Au$ into the category of unital partially ordered Abelian groups (with morphisms the unital order-preserving group homomorphisms)  is proved to have a  left adjoint in \cite[Appendice A.2]{BigardKeimelWolfenstein77}; the argument there is for the non-unital case, but is easily adapted. Thus, the unital Abelian $\ell$-group freely generated by any unital partially ordered Abelian group exists. As a special case of this, one has that  \emph{the unital Abelian $\ell$-group freely generated by a unital partially ordered Abelian group that happens to be lattice-ordered exists}. Upon applying the results in Section \ref{sec:Gamma} to translate into the language of ordered groups, Corollary \ref{c:existenceuniv} provides an alternative proof of this result that is streamlined by the use of two-sorted algebraic theories. For clarity, we mention that  Bigard, Keimel, and Wolfenstein in  \cite{BigardKeimelWolfenstein77} distinguish between ``universal'' and free  $\ell$-groups: the former are in fact what we call ``free'', as is now standard; the latter have the further property that the universal arrow  is an order embedding. Our own usage of ``universal'' for the components of the unit of the adjunction  $\Upsilon\dashv|-|$ is meant as mere emphasis, in view of the probabilistic meaning of the construction.
\end{remark}

General free algebras in $\P$ reduce to universal states and  coproducts in $\MValg$; see \cite{Mundici88JA} for the latter. We also say ``sum'' for ``coproduct''. We write $+$ to denote binary sums in $\MValg$. If $A\to A+B \leftarrow B$ is a coproduct, we call the two arrows the coproduct injections (with no implication about their injectivity as functions), and we often denote them ${\rm in}_1$ and ${\rm in}_2$, respectively. 
\begin{theorem}\label{t:freesums}
For any two-sorted set $(S_{1},S_{2})$, the state
\begin{align}\label{eq:freesums}
(F S_{1},\Upsilon F S_1 +F S_{2} )
\end{align}
in $\P$ with operation $s$ equal to ${\rm in}_{1} \upsilon_{FS_1}$, where $\upsilon_{FS_1}$ is the universal state of $FS_1$, and ${\rm in}_{1}\colon \Upsilon FS_1\to \Upsilon F S_1 + F S_{2}$ is the first coproduct injection, is the free state generated by $S\coloneqq(S_1,S_2)$, the component of the unit at $S$ being $\eta_S\coloneqq (\iota_{S_1}, {\rm in_2}\iota_{S_2} )$ with $\iota_{S_i}$ as in \eqref{eq:freeMVunit}, $i=1,2$, and ${\rm in_2}\colon FS_2\to \Upsilon F S_1 +F S_{2}$ the second coproduct injection.

\end{theorem}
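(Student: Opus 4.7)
The plan is to verify the universal property of $(FS_1, \Upsilon FS_1 + FS_2)$ directly, using in sequence the three universal properties already at our disposal: that of the free MV-algebra $FS_i$, that of the universal state $\upsilon_{FS_1}$ (granted by Corollary \ref{c:existenceuniv}), and that of the coproduct $\Upsilon FS_1 + FS_2$ in $\MValg$. Note that one indeed recognises the statement as the familiar construction of a free object in a two-sorted theory from free objects in each sort plus the ``interface'' arrow, so the proof is essentially a bookkeeping exercise.

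Fix an object $(M,N)$ of $\P$ with operation $s'\colon M\to N$, and a two-sorted function $f=(f_1,f_2)\colon (S_1,S_2)\to (M,N)$. I first build the first component $h_1\colon FS_1\to M$ as the unique homomorphism extending $f_1\colon S_1\to M$, by the universal property of $FS_1$. The composite $s'h_1\colon FS_1\to M\to N$ is a state (composition of a state with an MV-algebra homomorphism on the left is again a state, as follows immediately from Definition \ref{d:state}), so by the universal property of $\upsilon_{FS_1}$ there is a unique homomorphism $k_1\colon \Upsilon FS_1\to N$ with $k_1 \upsilon_{FS_1}=s'h_1$. Independently, $f_2\colon S_2\to N$ extends uniquely to a homomorphism $k_2\colon FS_2\to N$. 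The universal property of the binary coproduct $\Upsilon FS_1 + FS_2$ in $\MValg$ then yields a unique homomorphism $h_2\colon \Upsilon FS_1+FS_2\to N$ with $h_2\,\mathrm{in}_1=k_1$ and $h_2\,\mathrm{in}_2=k_2$.

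Next I check that $(h_1,h_2)$ is a morphism in $\P$, i.e.\ that the square \eqref{eq:diagmn} commutes. Since $s=\mathrm{in}_1\upsilon_{FS_1}$, one has
\[
h_2 s = h_2\,\mathrm{in}_1\,\upsilon_{FS_1} = k_1\upsilon_{FS_1} = s'h_1,
\]
as required. Unwinding $\eta_S=(\iota_{S_1},\mathrm{in}_2\iota_{S_2})$, we also compute $h_1\iota_{S_1}=f_1$ and $h_2\,\mathrm{in}_2\,\iota_{S_2}=k_2\iota_{S_2}=f_2$, so $(h_1,h_2)\eta_S=f$.

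For uniqueness, suppose $(h_1',h_2')\colon(FS_1,\Upsilon FS_1+FS_2)\to(M,N)$ is a morphism in $\P$ with $(h_1',h_2')\eta_S=f$. The equality $h_1'\iota_{S_1}=f_1$ forces $h_1'=h_1$ by the universal property of $FS_1$. Now $h_2'\,\mathrm{in}_2\,\iota_{S_2}=f_2$ forces $h_2'\,\mathrm{in}_2=k_2$ by the universal property of $FS_2$. On the other hand the commutativity of \eqref{eq:diagmn} for $(h_1',h_2')$ gives $h_2'\,\mathrm{in}_1\,\upsilon_{FS_1}=h_2's=s'h_1'=s'h_1=k_1\upsilon_{FS_1}$; the universal property of $\upsilon_{FS_1}$ then yields $h_2'\,\mathrm{in}_1=k_1$. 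Having agreed with $h_2$ on both coproduct injections, $h_2'=h_2$ by the universal property of the coproduct. This completes the argument; no step presents a genuine obstacle, as each equality is forced by one of the three universal properties invoked, and the only mild subtlety is to recognise at the appropriate moment that $s'h_1$ is a state so that $\upsilon_{FS_1}$ may legitimately be applied.
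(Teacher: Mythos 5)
Your proof is correct and follows essentially the same route as the paper's: build $h_1$ from the freeness of $FS_1$, factor the state $s'h_1$ through $\upsilon_{FS_1}$, extend $f_2$ via the freeness of $FS_2$, and glue with the coproduct. You in fact spell out the uniqueness argument in more detail than the paper, which leaves it to the reader.
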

\begin{proof}We consider a two-sorted function $(f_1,f_2)\colon (S_1,S_2)\to (M,N)$. With reference to the diagram below,
\[
    \begin{tikzcd}
     S_{1} \ar[dr, "f_1"']  \ar[r, "\iota_{S_1}"] & F S_{1} \ar[dr] \ar[d, "h_1"]  \ar[r, "\upsilon_{FS_1}"] & \Upsilon F S_{1} \ar[d] \ar[r, "{\rm in}_1"] & \Upsilon F S_1 +F S_{2} \ar[dl, "h_2"']  \\
     & M \ar[r] & N & \ar[l] F S_{2} \ar[u, "{\rm in}_2"'] \\
     &          &   & S_2 \ar[u, "\iota_{S_2}"'] \ar[ul, "f_2"]
    \end{tikzcd}
\]
we have:
\begin{itemize}
\item Exactly one homomorphism  $h_1\colon FS_1\to M$ making the upper left triangle commute, by the freeness of $FS_1$;
\item the state $FS_1\to N$ given by the composition $FS_1\to M\to N$;
\item exactly one homomorphism $\Upsilon FS_1\to N$ making at its immediate left commute;
\item exactly one homomorphism  $FS_2\to N$ making the lower right triangle commute, by the freeness of $FS_2$;
\item and therefore, by the universal property of coproducts, there is exactly one homomorphism $h_2\colon \Upsilon FS_1+ FS_2\to N$ such that precomposing it with the coproduct injections ${\rm in}_1$ and ${\rm in_2}$ yields $\Upsilon FS_1\to N$ and $FS_2\to N$, respectively.
\end{itemize}
By construction, the pair $(h_1,h_2)$ is a morphism in $\P$ that  satisfies $(f_1,f_2)=(h_1,h_2)(\iota_{S_1}, {\rm in_2}\iota_{S_2} )$. That it is the unique such follows readily from  the diagram  above.
\end{proof}
\section{Universal states, and Choquet's affine representation}\label{s:choquet}
We recall basic facts about the theory of affine representations. For background information and  references see \cite[Chapters 5--7]{Goodearl86}. We formulate the results in~the~language of ordered groups, as is traditional; they may be translated for MV-algebras via the equivalence in Theorem \ref{thm:Mundiciequiv}.

For a  unital Abelian $\ell$-group $G$, set 
\[
\St{G}\coloneqq\left\{s\colon G\to\R\mid s \text{ is a real-valued state}\right\}\subseteq\R^G\,.
\]
 Equip $\R$ with its Euclidean topology, $\R^G$ with the product  topology, and $\St{G}$ with the subspace topology. Then $\St{G}$, the \emph{state space of $G$}, is a compact  Hausdorff space, which is moreover a convex set in the vector space $\R^G$. For every $a\in G$ we consider the function
\begin{align}\label{eq:transf}
\hat{a}\colon \St{G}&\longrightarrow\R\\\nonumber
s &\longmapsto s(a).
\end{align}
The function $\hat{a}$ is continuous and affine; we  write $\Aff{(\St{G})}$ for the set of all continuous affine maps $\St{G}\to\R$. Then \cite[Theorem 11.21]{Goodearl86} says that $\Aff{(\St{G})}$ is a~unital  Abelian $\ell$-group under pointwise addition and order, with unit the function constantly equal to $1$. The  map
\begin{align}
e_G\colon G&\xrightarrow{\ \ \ \ }\Aff{(\St{G})}\label{eq:affrep}\\ \nonumber
a&\xmapsto{\ \ \ \ } \hat{a},
\end{align}
induced  by \eqref{eq:transf} is  a state $G\to \Aff{(\St G)}$. It is an isomorphism onto its range  precisely when $G$ is Archimedean \cite[Theorem 7.7]{Goodearl86}. Recall that $G$ is \emph{Archimedean} if for $a,b\in G$, $na\leq b$ for all positive integers $n$ implies $a\leq 0$. In any case, even when $G$ fails to be Archimedean, we call \eqref{eq:affrep} the \emph{affine representation} of $G$. 

For any compact Hausdorff space $X$, write $\C{(X)}$ for the unital Abelian $\ell$-group of continuous real-valued functions on $X$, operations being defined pointwise; the function   constantly equal to $1$ is the  unit.  For every unital Abelian $\ell$-group $G$, the inclusion $\Aff{(\St{G})}\subseteq \C{(\St{G})}$ preserves the unit, the group structure, and the partial order. In general, however, it fails to preserve the lattice structure. Let us therefore consider the sublattice-subgroup $\widehat{G}$ of $\C{(\St{G})}$ generated by the image of $G$ under the affine representation map $e_G$ in  \eqref{eq:affrep}. Modifying the codomain of the~ affine representation accordingly, but retaining the same notation, we obtain the state
\begin{align}
e_G\colon G&\xrightarrow{\ \ \ \ }\widehat{G}\label{eq:paffrep}\\ \nonumber
a&\xmapsto{\ \ \ \ } \hat{a}, 
\end{align}
which we call the \emph{extended affine representation} of $G$. 

Reformulating the notion of universal state of MV-algebras, a state
\[
\upsilon_G\colon G\longrightarrow \Upsilon G
\]
of the unital Abelian $\ell$-groups $G$ will be called universal (for $G$) if for each state $s\colon G\to H$ there is exactly one unital $\ell$-homomorphism $h\colon \Upsilon G\to H$ satisfying $h\upsilon_G=s$. We will relate the extended affine representation (\ref{eq:paffrep}) of $G$ with the codomain $\Upsilon G$ of a universal state. In light of the universal property of $v_G$ there is exactly one comparison unital $\ell$-homomorphism 

\begin{equation}\label{eq:comparison}
  q_G\colon \Upsilon G\longrightarrow \widehat{G}
  \end{equation}
  that satisfies 
  \begin{equation}\label{eq:comparisoneq}
  q_G v_G=e_G\,.
  \end{equation}

\begin{proposition}\label{pro:semisimplecodomain}
Let $G$ be a unital Abelian $\ell$-group. The homomorphism $q_G
$ in~\eqref{eq:comparison} is an isomorphism if, and only if, $\Upsilon G$ is Archimedean.
\end{proposition}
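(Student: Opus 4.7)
The strategy is to handle the two implications separately, with the forward direction being essentially immediate. If $q_G$ is an isomorphism then $\Upsilon G\cong \widehat{G}$, and $\widehat{G}$ is a unital sub-$\ell$-group of the pointwise-ordered Archimedean $\ell$-group $\C(\St G)$; since the Archimedean property is inherited by sub-$\ell$-groups, $\Upsilon G$ is Archimedean.

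For the reverse direction, I would first establish surjectivity of $q_G$. The key sub-lemma is that $\upsilon_G(G)$ generates $\Upsilon G$ as a unital sub-$\ell$-group: letting $B$ denote the unital sub-$\ell$-group of $\Upsilon G$ so generated, the corestriction $G\to B$ is still a state, so the universal property of $\upsilon_G$ yields a unital $\ell$-homomorphism $\Upsilon G\to B$ whose composition with the inclusion $B\hookrightarrow \Upsilon G$ must, by uniqueness, be the identity; hence $B=\Upsilon G$. Since $e_G(G)=q_G(\upsilon_G(G))$ generates $\widehat{G}$ by definition and $q_G$ is a unital $\ell$-homomorphism, this forces $q_G(\Upsilon G)=\widehat{G}$.

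For injectivity, observe that for each $s\in\St G$ evaluation at $s$, call it $\mathrm{ev}_s\colon \widehat{G}\to \R$, is a unital $\ell$-homomorphism; consequently $\mathrm{ev}_s\circ q_G$ is a unital $\ell$-homomorphism $\Upsilon G\to \R$ whose precomposition with $\upsilon_G$ is precisely $s$. The uniqueness clause of the universal property of $\upsilon_G$ forces $\mathrm{ev}_s\circ q_G$ to coincide with the extension $\widetilde{s}\colon \Upsilon G\to \R$ of $s$, and conversely every unital $\ell$-homomorphism $\phi\colon \Upsilon G\to \R$ arises as some such $\widetilde{s}$ (take $s\coloneqq \phi\upsilon_G$). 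Hence $q_G(x)=0$ if and only if $\phi(x)=0$ for every unital $\ell$-homomorphism $\phi\colon \Upsilon G\to \R$, i.e., $x$ lies in the intersection of the maximal $\ell$-ideals of $\Upsilon G$. The technical heart of the argument---and the main obstacle---is then to invoke the classical result that a unital Abelian $\ell$-group is Archimedean if and only if it is semisimple (the intersection of its maximal $\ell$-ideals is trivial); under our hypothesis on $\Upsilon G$ this yields $x=0$, completing the proof that $q_G$ is injective, and hence an isomorphism.
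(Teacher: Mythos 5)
Your proof is correct and follows essentially the same route as the paper's: surjectivity of $q_G$ because it carries the generating set $\upsilon_G[G]$ onto the generating set $e_G[G]$ of $\widehat{G}$, and injectivity by using the evaluation maps ${\rm ev}_s$ and the universal property of $\upsilon_G$ to identify $\ker q_G$ with $\Rad{\Upsilon G}$, then invoking the classical equivalence of ``Archimedean'' with ``trivial radical''. The only differences are organisational: you supply the retraction argument showing that $\upsilon_G[G]$ generates $\Upsilon G$ (which the paper merely asserts), and you treat the forward implication separately rather than reading both directions off the single identity $\ker q_G=\Rad{\Upsilon G}$.
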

\begin{proof}First observe that $q_G$ is always surjective. Indeed, $v_G[G]$ and $e_G[G]$ generate $\Upsilon G$ and $\widehat{G}$ as  $\ell$-groups, respectively. Therefore,  by \eqref{eq:comparisoneq}, the map $q_G$ throws a~generating set of $\Upsilon G$ onto a generating set of $\widehat{G}$; hence it is surjective. Further,
  recall that the \emph{radical ideal} of a unital Abelian $\ell$-group $H$ is defined as
  \[
  \Rad{H}\coloneqq\bigcap\left\{\ker{h}\mid h\colon H\to\R  \text{ is a homomorphism}\right\},
  \]
  and that $H$ is Archimedean if, and only if, $\Rad{H}=\{0\}$. It now suffices to prove that $\ker{q_G}=\Rad{\Upsilon G}$. Since 
  $\widehat{G}$ is Archimedean by construction, and unital $\ell$-homomorphisms such as $q_G$ preserve radical ideals, the inclusion $\Rad{\Upsilon G}\subseteq\ker{q_G}$ is clear. For the converse inclusion, suppose by contraposition that $x\not\in\Rad{\Upsilon G}$, with the intent of showing $q_Gx\neq 0$. By the hypothesis there is a unital $\ell$-homomorphism $h\colon \Upsilon G\to \R$ such that $hx\neq 0$. Thus we have a real-valued state 
  \[
  s\coloneqq h v_G\colon G\longrightarrow \R.
  \]
  Evaluation of elements of $\widehat{G}$ at $s$ produces a homomorphism 
  \begin{align*}
  {\rm ev}_{s}\colon\widehat{G}&\longrightarrow \R\\
  f&\longmapsto fs\in\R
  \end{align*}
  such that 
  \begin{align}\label{eq:eval}
  {\rm ev}_{s} e_G= h v_G.
  \end{align}
  To see that \eqref{eq:eval} holds, pick $a\in G$ and compute: $({\rm ev}_s e_G)(a) =  {\rm ev}_s(e_G(a))=(e_G(a))(s)=s(a)$, where the last equality is given by the definition  \eqref{eq:transf}. Since $s= h v_G$, \eqref{eq:eval} holds.

  From (\ref{eq:comparisoneq}--\ref{eq:eval}) we deduce
  \[
  {\rm ev}_{s} q_G v_G = h v_G\,,
  \]
  which by the universal property of $v_G$ entails
  \begin{align}\label{eq:infer}
  {\rm ev}_s q_G=h.
  \end{align}
  Since $hx\neq 0$ by hypothesis, from \eqref{eq:infer} we infer $q_Gx\neq 0$, as was to be shown.
  \end{proof}
\begin{remark}\label{r:Bernau}The question of when the  Abelian $\ell$-group freely generated by an Archimedean partially ordered Abelian group is itself Archimedean has long had an important place in the theory of ordered groups.  Bernau \cite{Bernau1969} gave an example of an Archimedean partially ordered Abelian group, which moreover happens to be lattice-ordered, such that the Abelian $\ell$-group it freely generates  fails to be Archimedean. Further, Bernau obtained in \cite[Theorem 4.3]{Bernau1969} a necessary and sufficient condition for the  Abelian $\ell$-group freely generated by a partially ordered Abelian group to be Archimedean; the condition, which we do not reproduce here due to its length, is known as the \emph{uniform Archimedean property} of a partially ordered group. Via the results in Section \ref{sec:Gamma},  Bernau's example entails that the universal state of a semisimple MV-algebra may have a non-semisimple codomain. However,  we shall see in the next section that this  cannot happen when the semisimple MV-algebra in question is locally finite.
\end{remark}
\section{The universal state of a locally finite MV-algebra}\label{s:univlocfin}
An MV-algebra  is \emph{locally finite} if each of its finitely generated subalgebras is finite. By general algebraic considerations, any algebraic structure is the direct union of its finitely generated subalgebras \cite[Theorem 2.7]{Jacobson_II_89}. Thus, every locally finite MV-algebra is the direct union of its finite subalgebras. We are going to prove:
\begin{theorem}\label{t:univlocfinisreal}Let $\upsilon_M\colon M\to \Upsilon M$ be the universal state of a locally finite MV-algebra $M$ (cf.\ Corollary \ref{c:existenceuniv}). Then $\Upsilon M$ is semisimple. 
\end{theorem}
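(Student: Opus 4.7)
The plan is to exploit that $M$ is a directed union of its finite subalgebras $(M_i)_{i\in I}$, together with the fact that $\Upsilon$, being a left adjoint (the functor introduced in~\eqref{eq:upsilon}), preserves colimits. First I would verify that the directed colimit in $\MValgs$ of an upward-filtered family of MV-algebra inclusions agrees with the corresponding colimit in $\MValg$ (a compatible family of states on the pieces glues pointwise to a state on the union), so that $\Upsilon M \cong \varinjlim_i \Upsilon M_i$ in $\MValg$, with structural maps $\phi_i\colon \Upsilon M_i \to \Upsilon M$ induced by the inclusions. The problem then splits into: (a) computing each $\Upsilon M_i$ explicitly enough to see it is semisimple; and (b) proving each transition map $\Upsilon M_i \to \Upsilon M_j$ (for $i\le j$) is injective.

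For (a), I would decompose the finite MV-algebra $M_i$ as a product of finite MV-chains, $M_i \cong L_{k_1+1}\times\cdots\times L_{k_m+1}$, where $L_{k+1}=\{0,1/k,\ldots,1\}$. A state $s\colon M_i\to N$ is then determined by the elements $q_j\coloneqq s(\epsilon_j)\in N$, where $\epsilon_j\in M_i$ has $1/k_j$ in the $j$-th coordinate and $0$ elsewhere, subject to the single relation $\sum_{j=1}^m k_j q_j=1$ in $\Xi N$ (positivity is built into the MV-structure, and pairwise orthogonality of the $k_j q_j$ follows from this sum being $1$ together with positivity). By the polyhedral duality between finitely presented MV-algebras and compact rational polyhedra with integer-coefficient piecewise linear maps, $\Upsilon M_i$ is identified with the MV-algebra of such maps $P_i\to[0,1]$, where
\[
P_i=\bigl\{(q_1,\ldots,q_m)\in[0,1]^m:q_j\ge 0,\ \textstyle\sum_j k_j q_j=1\bigr\}
\]
is a rational simplex. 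Since $\Upsilon M_i$ embeds into $\C(P_i,[0,1])$, it is semisimple.

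For (b), I would observe that under the same duality the transition $\Upsilon M_i\to\Upsilon M_j$ corresponds to the piecewise linear integer-coefficient map $\rho_{ji}\colon P_j\to P_i$ that sends a real-valued state of $M_j$ (a point of $P_j$) to its restriction to $M_i$. The classical state-extension theorem for unital Abelian $\ell$-groups (see e.g.\ \cite[Corollary~4.3]{Goodearl86}), applied to $\Xi M_i\subseteq\Xi M_j$ via Theorem~\ref{thm:generalGamma}, guarantees that every real-valued state of $M_i$ extends to one of $M_j$. Thus $\rho_{ji}$ is surjective and, equivalently, $\Upsilon M_i\to\Upsilon M_j$ is injective.

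To finish, I would set $X\coloneqq\varprojlim_i P_i$ in the category of compact Hausdorff spaces. Since each $P_i$ is non-empty, compact and Hausdorff, and the bonding maps $\rho_{ji}$ are surjective, each projection $\pi_i\colon X\to P_i$ is also surjective. The compositions $\Upsilon M_i\hookrightarrow\C(P_i,[0,1])\xrightarrow{\pi_i^*}\C(X,[0,1])$ form a compatible family of injections; by injectivity of the transitions, each structural map $\phi_i$ is injective, and the compatible family assembles into an injective homomorphism $\Upsilon M\hookrightarrow\C(X,[0,1])$, so $\Upsilon M$ is semisimple. The hard part will be item~(a): the polyhedral identification $\Upsilon M_i\cong\PolyZ(P_i)$ requires combining the universal property of $\upsilon_{M_i}$ with the combinatorics of products of finite MV-chains and with the fact that the $\ell$-group-level constraint $\sum_j k_j q_j=1$ generates precisely the MV-congruence cutting out $P_i$ from $[0,1]^m$. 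Once this is established, state extension and standard inverse-limit facts close the argument.
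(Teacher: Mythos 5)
Your proposal is correct, and its skeleton coincides with the paper's: write $M$ as the directed union of its finite subalgebras, use that $\Upsilon$ is a left adjoint and hence preserves directed colimits, identify $\Upsilon A$ for finite $A=M_{k_1}\times\cdots\times M_{k_n}$ with $\MM\Delta_{\mathbf k}$ via the partition-of-unity presentation and polyhedral duality (your item (a) is exactly the content of Lemmas \ref{lem:ext}, \ref{l:geomrep}, \ref{l:dualofrefsim} and Corollary \ref{c:univgeom}), and conclude semisimplicity from injectivity of the transition maps. Where you genuinely diverge is the injectivity step, the paper's Lemma \ref{l:inj}. The paper argues entirely inside the duality: an injective homomorphism $h\colon A\to B$ is a monomorphism, so its dual $\Max h$ between the finite vertex sets is an epimorphism, hence surjective, and the denominator-divisibility Lemma \ref{l:zmapsdivide} shows that its unique affine extension $\alpha\colon\Delta_{\mathbf t}\to\Delta_{\mathbf k}$ is a surjective $\Z$-map, whence $\MM\alpha$ is injective. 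You instead recognise $\Delta_{\mathbf k}$ as the real-valued state space of $A$, identify the dual of the transition with the restriction map on states, and obtain its surjectivity from the classical state-extension theorem for unital partially ordered abelian groups applied to $\Xi M_i\subseteq\Xi M_j$. Your route gives a conceptually transparent reason for surjectivity and bypasses the verification that epimorphisms in $\PolyZ$ between finite point-sets are surjective, at the price of importing the extension theorem from \cite{Goodearl86} and the identification of the dual polyhedron with the state space; the paper's route stays within the arithmetic of $\Z$-maps. Your closing step---embedding the colimit into $\C(X,[0,1])$ for $X$ the inverse limit of the simplices, using surjectivity of the projections---is a concrete rendering of the paper's appeal to the fact that the radical of a directed colimit of semisimple MV-algebras with injective transition maps is trivial; both are valid.
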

The proof of this theorem will require a number of lemmas. For each integer $i\geq 1$, let $M_i\coloneqq \{0,\frac {1}{i},\dots, \frac{i-1}{i},1\}$ be the finite totally ordered MV-algebra of cardinality $i+1$. By \cite[Proposition 3.6.5]{CignoliOttavianoMundici00}, a finite  MV-algebra $A$  is isomorphic to a product $M_{k_1}\times\dots\times M_{k_n}$, for uniquely determined integers $k_1,\dots,k_n \geq 1$, $n\geq 0$. (When $n=0$, $A$ is the terminal MV-algebra $\{0=1\}$.) For the rest of this section we use $A$ to denote such a finite MV-algebra.
\begin{lemma}\label{lem:ext}
    Let $A=M_{k_1}\times\dots\times M_{k_n}$ be a finite MV-algebra and $N$ be any MV-algebra. Write $\{a_1,\ldots,a_n\}$ for the atoms of $A$.
    \begin{enumerate}
    \item Any state $s\colon A\to N$ is uniquely determined by its values at the atoms of~$A$.
    \item A function $s\colon \{a_1,\dots ,a_n\}\to N$ has an extension to a state $A\to N$ if, and only if, $k_1s(a_1) + \dots + k_ns(a_n)=1$, and in that case the extension is unique.
    \end{enumerate}
  \end{lemma}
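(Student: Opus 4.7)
The plan is to exploit the explicit atomic structure of the finite MV-algebra $A=M_{k_1}\times\cdots\times M_{k_n}$. Writing $a_i$ for the atom having $1/k_i$ in coordinate $i$ and $0$ elsewhere, every $a\in A$ admits a unique coordinate expansion $a=(j_1/k_1,\ldots,j_n/k_n)$ with $j_i\in\{0,1,\ldots,k_i\}$. A coordinatewise inspection shows that, denoting by $j_i a_i$ the $\oplus$-sum of $j_i$ copies of $a_i$, one has
\[
a\,=\,(j_1 a_1)\oplus\cdots\oplus(j_n a_n),
\]
with pairwise $\odot$-orthogonal summands: orthogonality across distinct coordinates is immediate, while within coordinate $i$ the inequality $j_i\leq k_i$ keeps partial sums below $1$. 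Via (\ref{eq:sum}), this $\oplus$-expression agrees with the group-theoretic sum in $\Xi A$, so I may also write $a=j_1 a_1+\cdots+j_n a_n$ without ambiguity.

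For part (1), I would iterate the defining equation $s(x\oplus y)=s(x)+s(y)$ (valid whenever $x\odot y=0$) along this decomposition to obtain
\[
s(a)\,=\,j_1 s(a_1)+\cdots+j_n s(a_n) \quad \text{in } \Xi N,
\]
which visibly depends only on the values $s(a_i)$ and proves the uniqueness assertion. The forward implication of (2) then follows by applying this formula to $a=1=\sum_i k_i a_i$, yielding $1=s(1)=\sum_i k_i s(a_i)$.

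For the converse of (2), suppose $\sum_i k_i s(a_i)=1$ in $\Xi N$, and define $\tilde s\colon A\to N$ by $\tilde s(\sum_i j_i a_i)\coloneqq \sum_i j_i s(a_i)$, computed in $\Xi N$. Well-definedness relies on the uniqueness of the coordinate expansion together with the bound $\sum_i j_i s(a_i)\leq\sum_i k_i s(a_i)=1$, which places the result in $N=\Gamma\Xi N$. To verify the state axioms, I would note that if $a=\sum_i j_i a_i$ and $b=\sum_i j'_i a_i$ satisfy $a\odot b=0$, then coordinatewise $j_i+j'_i\leq k_i$, hence $a\oplus b=\sum_i(j_i+j'_i)a_i$ and the defining formula gives $\tilde s(a\oplus b)=\tilde s(a)+\tilde s(b)$ directly. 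Uniqueness of the extension is inherited from part (1).

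The main obstacle is bookkeeping rather than conceptual: I need to check that coordinate inequalities correctly encode the MV-operations $\oplus$ and $\odot$ on $A$, and that each sum manipulated in $\Xi N$ genuinely lies inside the unit interval $N=\Gamma\Xi N$. Once the orthogonal atomic expansion $a=\sum_i j_i a_i$ is in hand, the remainder is a straightforward unfolding of definitions.
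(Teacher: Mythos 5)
Your proposal is correct and follows essentially the same route as the paper's proof, which identifies $\Xi A$ with the simplicial group $\Z^n$ having the atoms as standard basis vectors and defines the extension by $s'(a)\coloneqq\sum_i c_i s(a_i)$, leaving the verification that $s'$ is a state to the reader. Your write-up simply makes explicit the orthogonal atomic decomposition and the coordinatewise checks that the paper compresses into ``one verifies that $s'$ is a state''.
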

  \begin{proof}The unital Abelian $\ell$-group $\Xi A$ is  the simplicial group $\Z^n$ with unit the element $(k_1,\ldots,k_n)$, and the atoms $a_i$ are the standard basis elements of $\Z^n$; from this item 1 follows at once. As for item 2, the left-to-right implication follows directly from the definition of state. Conversely, assume $k_1s(a_1) + \dots + k_ns(a_n)=1$. For any  $a\in \Gamma\Xi A\subseteq \Xi A= \Z^n$, we can write $a=\sum_{i=1}^nc_ia_i$ for uniquely determined integers $c_i$. Setting $s'(a)\coloneqq \sum_{i=1}^n c_is(a_i)$, one  verifies  that $s'$ is a state, and then, by item 1, $s'$ is  the unique state extending $s$.
  \end{proof}

 Set $\mathbf{k}\coloneqq(k_1,\ldots,k_n)$, for short. If $Fn$ is the  MV-algebra freely generated by the set $\{x_1,\ldots,x_n\}$, there is a finitely 
generated (hence principal, by \cite[Lemma 1.2.1]{CignoliOttavianoMundici00}) ideal $U(\mathbf{k})$ of $Fn$ determined by the partition-of-unity relation
\begin{align}\label{eq:partitionunity}
\sum_{i=1}^nk_ix_i=1,
\end{align}
where addition is interpreted in the unital Abelian $\ell$-group $\Xi Fn$. In more detail, there is a term $\sigma(x_1,\ldots,x_n)$ in the language of MV-algebras such that, for any MV-algebra $M$, and for any elements $b_1,\ldots,b_n\in M$,     $\sigma(b_1,\dots,b_n)=0$ holds in $M$ if, and only if, $\sum_{i=1}^nk_ib_i=1$ holds in $\Xi M$.  Thus,  $U(\mathbf{k})$ is  the ideal of $Fn$ generated  by $\sigma(x_1,\ldots,x_n)$, when the latter is regarded as an element of $Fn$ in the usual way. For an explicit computation of $\sigma$ the interested reader can consult \cite{Mundici2000reasoning}. We write $S_{\mathbf{k}}$ for the quotient algebra $F n/U(\mathbf{k})$. By Lemma \ref{lem:ext}, the function
\[
a_i\longmapsto x_i,\qquad i=1,\ldots,n,
\]
is extended by exactly one state 
\begin{align}\label{eq:univstatefin}
\upsilon_A\colon A \longrightarrow S_{\mathbf{k}},
\end{align}
where $A=M_{k_1}\times\dots\times M_{k_n}$.
\begin{lemma}\label{l:geomrep} The state \eqref{eq:univstatefin} is the universal state of the MV-algebra $A=M_{k_1}\times\dots\times M_{k_n}$.
\end{lemma}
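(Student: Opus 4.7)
The plan is to verify the universal property of $\upsilon_A$ directly, using the free presentation of $S_{\mathbf{k}}$ together with the characterisation of states out of $A$ provided by Lemma \ref{lem:ext}. Fix an MV-algebra $N$ and a state $s\colon A\to N$. First I would apply Lemma \ref{lem:ext}(1) to observe that $s$ is determined by the elements $b_i\coloneqq s(a_i)\in N$, and by Lemma \ref{lem:ext}(2) these elements satisfy $\sum_{i=1}^n k_i b_i=1$ in $\Xi N$.

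Next I would use the universal property of the free MV-algebra $Fn$ on the generators $x_1,\dots,x_n$ to obtain the unique homomorphism $\tilde h\colon Fn\to N$ with $\tilde h(x_i)=b_i$. By the defining property of the term $\sigma(x_1,\dots,x_n)$ recalled in the paragraph introducing the ideal $U(\mathbf{k})$, the partition-of-unity relation $\sum_i k_i b_i=1$ in $\Xi N$ is equivalent to $\sigma(b_1,\dots,b_n)=0$, i.e.\ $\tilde h(\sigma(x_1,\dots,x_n))=0$. Since $U(\mathbf{k})$ is the principal ideal of $Fn$ generated by $\sigma(x_1,\dots,x_n)$, the homomorphism $\tilde h$ kills $U(\mathbf{k})$, and therefore factors through the quotient $S_{\mathbf{k}}=Fn/U(\mathbf{k})$ as a unique homomorphism $h\colon S_{\mathbf{k}}\to N$ satisfying $h([x_i])=b_i$ for each $i$.

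It remains to check $h\circ \upsilon_A=s$ and that $h$ is the unique such homomorphism. For the factorisation, both $h\circ \upsilon_A$ and $s$ are states $A\to N$, and on atoms they coincide: $(h\circ\upsilon_A)(a_i)=h([x_i])=b_i=s(a_i)$; so Lemma \ref{lem:ext}(1) forces $h\circ\upsilon_A=s$. For uniqueness, note that the classes $[x_i]=\upsilon_A(a_i)$ generate $S_{\mathbf{k}}$ as an MV-algebra (they generate $Fn$, and quotient maps are surjective), so any homomorphism $h'\colon S_{\mathbf{k}}\to N$ with $h'\circ \upsilon_A=s$ must satisfy $h'([x_i])=b_i$, whence $h'=h$.

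I do not foresee a serious obstacle: the proof is essentially bookkeeping once the term $\sigma(x_1,\dots,x_n)$ is in hand. The only point requiring minimal care is the equivalence between the partition-of-unity identity in $\Xi N$ and the MV-algebraic identity $\sigma(b_1,\dots,b_n)=0$ in $N$, which is exactly the content of the construction of $\sigma$ recorded before the statement and follows via Mundici's equivalence (Theorem \ref{thm:Mundiciequiv}).
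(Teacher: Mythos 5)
Your proposal is correct and follows essentially the same route as the paper's proof: both obtain $h\colon S_{\mathbf{k}}\to N$ from the assignment $x_i\mapsto s(a_i)$ via the presentation of $S_{\mathbf{k}}$ (you spell out the factorisation through $Fn/U(\mathbf{k})$ that the paper compresses into ``the universal property of homomorphic images in varieties''), and both verify $h\upsilon_A=s$ and uniqueness by agreement on the atoms, respectively on the generators $x_i$. The only implicit step in your write-up---that $h\circ\upsilon_A$ is again a state, needed to invoke Lemma \ref{lem:ext}(1)---is immediate from Proposition \ref{p:equational}, and the paper sidesteps it by appealing directly to the fact that the atoms form a $\Z$-module basis of $\Xi A=\Z^n$.
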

\begin{proof}If $s\colon A \to N$ is any state, and $\{a_1,\ldots,a_n\}$ is the set of atoms of $A$, consider the assignment $x_i\mapsto s(a_i)$, $i=1,\ldots, n$. We have $\sum_{i=1}^n k_is(a_i)=1$ in $N$, because $s$ is a state. Then, by the definition of  $S_{\mathbf{k}}$ and the universal property of homomorphic images in varieties, this assignment has exactly one extension to a homomorphism $h\colon S_{\mathbf{k}}\to N$. For each $a_i\in A$ we have $(h\upsilon_A)a_i=s(a_i)$, which entails $h\upsilon_A=s$ because $\{a_1,\ldots, a_n\}$ is a $\Z$-module basis of $\Xi A=\Z^n$. Such $h$ is unique, because if $h'\colon S_{\mathbf{k}}\to N$ satisfies $h'\upsilon_A=s$ then $h$ and $h'$ must agree on the generating set $\{x_1,\ldots,x_n\}$ of $S_{\mathbf{k}}$: indeed, $h'\upsilon_A=h\upsilon_A$ entails $h'x_i=(h'\upsilon_A)a_i=(h\upsilon_A)a_i=hx_i$, $i=1,\ldots,n$.
\end{proof}  
For the proof of the last lemma we need, Lemma \ref{l:inj} below, we  shall apply geometric techniques. We use the integer $d\geq 0$ to denote the dimension of the
real vector space $\R^d$.  A function $f \colon \R^d \to \R$ is \emph{PL} (for \emph{piecewise linear}) if it is continuous
with respect to the
Euclidean topology on $\R^d$ and $\R$, and there is a finite set of 
affine functions $l_1,\ldots,l_u\colon \R^d\to\R$ such that for each $x \in \R^d$ one has $f(x)=l_i(x)$
for some choice of $i=1,\ldots,u$. (We note in passing that the terminology ``piecewise linear'' is traditional, even though ``piecewise affine'' would be, strictly speaking, more appropriate.) If moreover, each $l_i\colon \R^d\to\R$ can be chosen so as to restrict to a function $\Z^d\to\Z$, then $f$ is a \emph{$\Z$-map}. (This  terminology comes from \cite{Mundici11}.) In coordinates, this is equivalent to asking that $l_i$ can be written as a linear polynomial with integer
coefficients.
For an integer $d'\geq 0$, a function $\lambda =(\lambda_1,\ldots,\lambda_{d'})\colon \R^d \to \R^{d'}$ is a
\emph{PL map} (respectively, \textit{a $\Z$-map}) if 
each one of its scalar components $\lambda_j\colon \R^d\to \R$ is a PL function ($\Z$-map). One defines PL  maps ($\Z$-maps) $A \to B$ for arbitrary
subsets $A\subseteq \R^d$, $B\subseteq \R^{d'}$ as the restriction and
co-restriction of PL maps  ($\Z$-maps). 

A \emph{convex combination} of a finite set of vectors $v_1,\ldots,v_u \in \R^d$
is any vector of the form
$\lambda_1v_1+\cdots+\lambda_uv_u$, for non-negative real numbers $\lambda_i
\geq 0$ satisfying $\sum_{i=1}^u\lambda_i=1$.
If $S\subseteq \R^d$ is any subset, we let $\conv{S}$ denote the \emph{convex
hull} of $S$, i.e.\ the collection of all convex combinations
of finite sets of vectors $v_1,\ldots,v_u \in S$. A \emph{polytope}  is any
subset of $\R^d$ of the form
$\conv{S}$, for some finite $S\subseteq \R^d$, and a (\emph{compact})
\emph{polyhedron}  is a  union of finitely many polytopes
in $\R^d$. A polytope is \emph{rational} if it may be written in the form
$\conv{S}$ for some finite set $S\subseteq \Q^d\subseteq \R^d$ of vectors with
rational coordinates. Similarly, a polyhedron is \emph{rational}
if it may be written as a union of finitely many rational polytopes.  
It is clear that the composition of $\Z$-maps is again a $\Z$-map. Rational polyhedra and $\Z$-maps thus form a category, which we denote $\PolyZ$. This is a non-full subcategory of the classical compact polyhedral category $\Poly$ whose objects are polyhedra and whose morphisms are PL maps.
\begin{remark}\label{r:reductiontocubes}The full subcategory of $\PolyZ$ whose objects are rational polyhedra lying in unit cubes $[0,1]^d$, as  $d$ ranges over all non-negative integers, is equivalent to  $\PolyZ$, see \cite[Claim 3.5]{marra_spada}. An analogous remark applies to $\Poly$. We shall make use of these facts whenever convenient, without further warning.\end{remark}
Given a subset  $S\seq \R^d$, we write 
    ${\MM}S$ for the set of all $\Z$-maps $S \to [0,1]$. Then ${\MM}S$ inherits from  $[0,1]$ the structure of an MV-algebra, upon defining operations pointwise; in other words, ${\MM}S$ is a subalgebra of the MV-algebra $\C{(S,[0,1])}$ of all continuous functions $S\to [0,1]$. 
It can be proved that if $X\seq \R^d$ is a rational polyhedron then ${\MM}X$ is finitely presentable; see e.g.\ \cite[Theorem 6.3]{Mundici11}, where the result is stated for $X\seq [0,1]^d$---the case $X\seq \R^d$  is then a consequence of Remark \ref{r:reductiontocubes}. Following tradition, from now on we say `finitely presented' instead of `finitely presentable', even when the latter would be the proper expression. By $\MVfp$ we denote the full subcategory of $\MV$ whose objects are finitely presented MV-algebras.
Further, if $P\subseteq \R^d$ and $Q\subseteq \R^{d'}$ are rational polyhedra for some integers $d,d'\geq 0$, a $\Z$-map $\lambda \colon P \to Q$
induces a function
\[
 {\MM}\lambda \colon {\MM}Q \xrightarrow{ \ \ \ \  } {\MM}P
\]
given by
\[
 f \in {\MM}Q \xmapsto{\ {\MM}\lambda \ } f\lambda \in {\MM}P.
\]
It can be shown that ${\MM}\lambda$ is a homomorphism of MV-algebras, see e.g.\ \cite[Lemma 3.3]{marra_spada}. We thereby obtain a functor
\begin{align}\label{eq:MVdual}
\MM\colon \PolyZ\xrightarrow{\ \ \ \ } \MVfpop.
\end{align}
To define a functor
\begin{align}\label{eq:MVdualbis}
\Max\colon \MVfpop \xrightarrow{\ \ \ \ }  \PolyZ
\end{align} in the other direction,  let us assume that a specific finite presentation of an MV-algebra is given. That is, we are given a finite  set $R\coloneqq\{\rho_i\in Fn\mid i=1,\ldots,m\}$ of elements of $Fn$, the free MV-algebra generated by $n$ elements $x_1,\ldots,x_n$, and we consider the finitely presented quotient $Fn/\langle R \rangle$, where $\langle R \rangle$ denotes the ideal of $Fn$ generated by $R$. Since every finitely generated ideal of any MV-algebra is principal by \cite[Lemma 1.2.1]{CignoliOttavianoMundici00}, we may safely assume  that $\langle R \rangle$ is generated by the single element $\rho$.
Writing $\pi_i\colon [0,1]^n\to [0,1]$ for the $i^{\rm th}$ projection function, the assignment $x_i\mapsto \pi_i$, $i=1,\ldots,n$, extends to exactly one homomorphism $Fn\to \MM [0,1]^n$ by the universal property of the free algebra, and this homomorphism is an isomorphism by \cite[Theorem 9.1.5]{CignoliOttavianoMundici00}. Thus, ${\MM}[0,1]^n$ is the MV-algebra freely generated by the projection functions. Under this isomorphism, $\rho$ corresponds to an  element $\rho'$ of ${\MM}[0,1]^n$, and the assignment $x_i\mapsto \pi_i$, $i=1,\ldots,n$, extends to exactly one isomorphism  $Fn/\langle \rho \rangle \to {\MM}[0,1]^n/\langle \rho'\rangle$. Then we set $\Max{Fn/\langle \rho \rangle}
\coloneqq \rho'^{-1}\{0\}$. Here, the zero set $\rho'^{-1}\{0\}$ of the $\Z$-map $\rho'$ is a rational polyhedron in $[0,1]^n$. When regarded  as a topological space, by \cite[Theorem 3.4.3]{CignoliOttavianoMundici00} the polyhedron $\rho'^{-1}\{0\}$ is homeomorphic to the maximal spectral space of the MV-algebra $Fn/\langle \rho \rangle$, whence the name of  the functor \eqref{eq:MVdualbis}. In turn, the maximal ideals of any MV-algebra $M$ are in bijection with the kernels of the homomorphisms $M\to [0,1]$ (\cite[Theorem 4.16]{Mundici11}). Therefore  the points of $\rho'^{-1}\{0\}$ are in bijection with (the kernels of) the homomorphisms $Fn/\langle \rho \rangle\to [0,1]$. To define $\Max$ on arrows, then, consider a homomorphism $h\colon Fn/\langle\rho\rangle\to Fm/\langle\tau\rangle$. Precomposing with $h$, a morphism $Fm/\langle\tau\rangle\to [0,1]$ is sent to a morphism $Fn/\langle\rho\rangle$, so that we obtain a mapping $\Max h\colon\tau'^{-1}\{0\}\to\rho'^{-1}\{0\}$ which can be proved to be a $\Z$-map. This is the definition of the functor $\Max$ for finitely presented MV-algebras; for the general case of a finitely presentable algebra, to complete the definition one chooses a finite presentation for each algebra. 
\begin{theorem}[Duality theorem for finitely presented MV-algebras]\label{t:duality}The  functors $\MM\colon \PolyZ\xrightarrow{\ \ } \MVfpop$ and $\Max\colon \MVfpop \xrightarrow{\ \ }  \PolyZ$  form an equivalence of categories.
\end{theorem}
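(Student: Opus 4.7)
The plan is to exhibit natural isomorphisms $\eta_P\colon P \to \Max\MM P$ and $\varepsilon_A\colon \MM\Max A \to A$ witnessing the equivalence. Invoking Remark \ref{r:reductiontocubes}, I may freely assume every rational polyhedron $P$ lies inside some unit cube $[0,1]^n$, and use the canonical isomorphism $Fn \cong \MM[0,1]^n$ recorded in the text as a common ambient setting for both sides of the duality.

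For the unit, define $\eta_P(p) \coloneqq \ker(\mathrm{ev}_p)$, where $\mathrm{ev}_p\colon \MM P \to [0,1]$ is evaluation at $p$. This kernel is a maximal ideal and, once one traces through the identification of $\Max \MM P$ with the zero-set in $[0,1]^n$ of a Z-map presenting $\MM P$, is literally the point $p$ in coordinates. That $\eta_P$ is bijective combines two classical facts: (a) $\MM P$ separates points of $P$, since already truncated integer-linear forms distinguish distinct points of any rational polyhedron, and (b) every homomorphism $\MM P \to [0,1]$ is evaluation at some point of $P$, which reduces via the quotient $\MM[0,1]^n \twoheadrightarrow \MM P$ to the corresponding statement for $\MM[0,1]^n$ itself, ultimately a form of McNaughton's theorem. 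A direct computation in coordinates shows $\eta_P$ and its inverse are Z-maps, and naturality is immediate: both compositions in the relevant square send $p$ to evaluation at $\lambda(p)$.

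For the counit, fix a presentation $A \cong \MM[0,1]^n/\langle \rho' \rangle$, so $\Max A$ identifies with $Z \coloneqq \rho'^{-1}\{0\} \subseteq [0,1]^n$. The restriction map $\MM[0,1]^n \to \MM Z$ annihilates $\rho'$ and descends to the required $\varepsilon_A\colon A \to \MM Z$. The hard part --- and the main obstacle --- is proving $\varepsilon_A$ is an isomorphism. Surjectivity requires that every Z-map $Z \to [0,1]$ extends to a Z-map $[0,1]^n \to [0,1]$; I would prove this by choosing a unimodular triangulation of $[0,1]^n$ whose trace on $Z$ refines a triangulation linearising the given map, and extending the map simplex-by-simplex while controlling integer coefficients by a denominator argument. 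Injectivity is the MV-algebraic Nullstellensatz: if $f \in \MM[0,1]^n$ vanishes on $Z$, then $f \in \langle \rho' \rangle$. My approach here is to pick a simultaneous unimodular triangulation of $[0,1]^n$ linearising both $f$ and $\rho'$, so that on each closed simplex $\sigma$ disjoint from $Z$ the function $\rho'$ is strictly positive; compactness then yields an integer $k_\sigma$ with $f \leq k_\sigma \rho'$ pointwise on $\sigma$, and taking a common $k$ across the finitely many simplices of the triangulation gives $f \leq k\rho'$ globally, placing $f$ in the principal MV-ideal $\langle \rho' \rangle$ (via the characterisation of principal ideals in the unital $\ell$-group $\Xi\MM[0,1]^n$ translated back through Section~\ref{sec:Gamma}). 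Naturality of $\varepsilon$ and independence from the chosen presentation then follow formally, completing the equivalence.
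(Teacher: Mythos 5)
The paper does not actually prove Theorem \ref{t:duality}; its ``proof'' is a pointer to \cite{marra_spada_sl} and \cite{marra_spada}. Your sketch therefore cannot be compared with an in-text argument, but it does follow the standard route of those references: identify $\Max\MM P$ with $P$ via kernels of evaluations (using that homomorphisms into $[0,1]$ are evaluations, i.e.\ a completeness/McNaughton argument), and show the restriction map $\MM[0,1]^n\to\MM Z$, $Z=\rho'^{-1}\{0\}$, is surjective (extension of $\Z$-maps via unimodular triangulations) and has kernel exactly $\langle\rho'\rangle$ (the MV-algebraic Nullstellensatz). The overall architecture is right, and most steps are standard even if each hides real work (well-definedness of $\Max$ on objects and arrows, naturality, the extension theorem).

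There is, however, one concrete gap in your injectivity argument. You linearise $f$ and $\rho'$ on a common unimodular triangulation and then obtain $f\leq k_\sigma\rho'$ by compactness \emph{only on the simplices $\sigma$ disjoint from $Z$}. But the delicate simplices are precisely those that meet $Z$ without being contained in it: there $\inf_\sigma\rho'=0$, so no constant supplied by compactness works, and your ``common $k$'' does not give $f\leq k\rho'$ globally. The correct step uses the rationality of the triangulation rather than compactness: on each simplex $\sigma$, both $f$ and $\rho'$ are linear, so it suffices to compare them at the vertices $v$ of $\sigma$; if $v\in Z$ then $f(v)=\rho'(v)=0$, while if $v\notin Z$ then $\rho'(v)$ is a positive rational whose denominator divides $\den{v}$ (cf.\ Lemma \ref{l:zmapsdivide}), whence $\rho'(v)\geq 1/\den{v}$ and $f(v)\leq 1\leq\den{v}\,\rho'(v)$. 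Taking $k$ to be the maximum of $\den{v}$ over all vertices of the triangulation yields $f\leq k\rho'$ at every vertex, hence everywhere by linearity, and so $f\leq\underbrace{\rho'\oplus\cdots\oplus\rho'}_{k}$, i.e.\ $f\in\langle\rho'\rangle$. With this repair (and with the extension and presentation-independence steps carried out in detail), your proposal becomes a correct proof along the lines of the cited literature.
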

\begin{proof}Two different proofs of this result are given in \cite{marra_spada_sl} and \cite{marra_spada}.
\end{proof}
\begin{remark}We will apply Theorem \ref{t:duality}  in the sequel without explicit reference to its numbered statement, freely using  such
expressions as ``the dual rational polyhedron $X$ of the finitely presented MV-algebra $A$''.
\end{remark}
 If $x \in \Q^d$, there is a unique way to write out $x$ in
coordinates as 
\[
 x = \left(\frac{p_1}{q_1},\ldots, \frac{p_d}{q_d}\right)
 \]
with $p_i,q_i \in\Z \ , \ q_i > 0$, $p_i$ and $q_i$  relatively prime for each $i=1,\ldots, d$.
The positive integer 
\[
\den{x}\coloneqq{\rm lcm}\,{\{q_1,q_2,\ldots,q_d\}}
\]
is the \emph{denominator} of $x$. For a non-negative integer $t$, a \emph{$t$-dimensional simplex} (or just \emph{$t$-simplex}) in $\R^d$ is the convex hull of $t+1$ affinely independent points in $\R^d$, called its \emph{vertices}; the vertices of a simplex are uniquely determined. A~\emph{face} of a~simplex $\sigma$ is the convex hull of a nonempty subset of the vertices of $\sigma$, and as such it is itself a simplex. The \emph{relative interior} of $\sigma$ is the set of points expressible as convex combinations of its vertices with strictly positive coefficients.
A simplex is \emph{rational} if its vertices are. The following notion is fundamental to the arithmetic geometry of $\PolyZ$: A simplex $\sigma\seq \R^d$ is \emph{regular} if whenever $x$ is a rational point lying in the relative interior of some face $\tau$ of $\sigma$ with vertices $v_1,\ldots,v_l$, then $\den{x}\geq \sum_{i=1}^l\den{v_i}$. See \cite[Lemma 2.7]{Mundici11}. Regular simplices are also called \emph{unimodular} in the literature.

Since the finite MV-algebra $A=M_{k_1}\times \cdots \times M_{k_n}$ is finitely presented, it has a dual polyhedron, which by duality must be the sum of the duals of $M_{k_i}$, $i=1,\ldots,{n}$.  The dual of each $M_{k_i}$ is immediately seen to be a single rational point $p$ (in some $\R^d$) such that $\den{p}=k_i$. Thus, the dual of $A$ is any finite set of points $\{p_1,\ldots,p_n\}$ such that $\den{p_i}=k_i$, $i=1,\ldots,n$. A specific coordinatisation of such a rational polyhedron is obtained as follows. Let $\{e_1,\ldots, e_n\}$ be the standard basis vectors in $\R^n$. Then $\den{\frac{e_i}{k_i}}=k_i$. Thus, $\{\frac{e_1}{k_1},\ldots, \frac{e_n}{k_n}\}$ is the rational polyhedron dual to $A$. The convex hull
\[
\Delta_{\mathbf{k}}\coloneqq\conv{\left\{\frac{e_1}{k_1},\ldots, \frac{e_n}{k_n}\right\}}
\]
is a regular simplex by direct inspection. For each $i=1,\ldots, n$, we write $\pi_i^{\mathbf{k}}\colon \Delta_{\mathbf{k}}\to [0,1]$ for the restriction of the projection function $\pi_i\colon[0,1]^n\to [0,1]$. Evidently, $\pi_i^\mathbf{k}\in{\MM}\Delta_{\mathbf{k}}$. Observe that
\begin{align}\label{eq:partitionofunitypi}
\sum_{i=1}^n k_i\pi_i^\mathbf{k}=1.
\end{align}
\begin{lemma}\label{l:dualofrefsim}For any $\mathbf{k}=(k_1,\ldots,k_n)$, the dual polyhedron of the finitely presented  MV-algebra $S_{\mathbf{k}}$ is $\Delta_{\mathbf{k}}$. In more detail, writing $x_i^\mathbf{k}$ for the generators provided by the presentation of $S_{\mathbf{k}}$,  the assignment $x_i^\mathbf{k}\mapsto \pi_i^\mathbf{k}$, $i=1,\ldots,n$, extends to exactly one homomorphism $S_{\mathbf{k}}\to {\MM}\Delta_{\mathbf{k}}$, and that homomorphism is an isomorphism.
\end{lemma}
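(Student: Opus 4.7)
The plan is to deduce the lemma directly from the duality theorem (Theorem \ref{t:duality}), using the natural presentation of $S_{\mathbf{k}}$ already recorded in the excerpt. Recall that the evaluation isomorphism $Fn \xrightarrow{\cong} {\MM}[0,1]^n$ sends each generator $x_i$ to the projection $\pi_i$. Under this isomorphism the generator $\sigma \in Fn$ of the principal ideal $U(\mathbf{k})$ corresponds to an element $\sigma' \in {\MM}[0,1]^n$. By the defining property of $\sigma$---that $\sigma(b_1,\ldots,b_n)=0$ in any MV-algebra $M$ iff $\sum_{i=1}^n k_i b_i = 1$ in $\Xi M$---applied point-wise to $M=[0,1]$ (so that $\Xi M = \R$), the zero locus of $\sigma'$ inside $[0,1]^n$ is
\[
Z \coloneqq \left\{(a_1,\ldots,a_n) \in [0,1]^n : \textstyle\sum_{i=1}^n k_i a_i = 1\right\}.
\]

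Next I would verify $Z = \Delta_{\mathbf{k}}$. For the inclusion $\Delta_{\mathbf{k}} \subseteq Z$, any convex combination $\sum_i \lambda_i (e_i/k_i)$ with $\lambda_i\geq 0$ and $\sum_i\lambda_i=1$ has coordinates $a_i=\lambda_i/k_i\in[0,1]$ (using $k_i\geq 1$) satisfying $\sum_i k_i a_i = 1$. Conversely, given $(a_1,\ldots,a_n)\in Z$, setting $\lambda_i \coloneqq k_i a_i$ gives nonnegative scalars with $\sum_i\lambda_i=1$ and $(a_1,\ldots,a_n)=\sum_i \lambda_i (e_i/k_i)$. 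Thus, by the construction of the functor $\Max$ recalled just before the lemma,
\[
\Max{S_{\mathbf{k}}} \;=\; \Max{Fn/\langle \sigma \rangle} \;=\; \sigma'^{-1}\{0\} \;=\; \Delta_{\mathbf{k}},
\]
proving the first claim.

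For the second claim, the duality supplies a canonical isomorphism $S_{\mathbf{k}}\xrightarrow{\cong} {\MM}\Delta_{\mathbf{k}}$, which by construction factors as
\[
S_{\mathbf{k}} \;\xrightarrow{\cong}\; {\MM}[0,1]^n/\langle\sigma'\rangle \;\xrightarrow{\cong}\; {\MM}\Delta_{\mathbf{k}},
\]
where the second arrow is induced by restriction along $\Delta_{\mathbf{k}}\hookrightarrow[0,1]^n$ (the key surjection of the $\MM/\Max$ duality). Chasing the generator $x_i^{\mathbf{k}}$ through the chain gives $x_i^{\mathbf{k}} \mapsto [\pi_i] \mapsto \pi_i|_{\Delta_{\mathbf{k}}} = \pi_i^{\mathbf{k}}$. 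Uniqueness of the homomorphism $S_{\mathbf{k}}\to{\MM}\Delta_{\mathbf{k}}$ extending $x_i^{\mathbf{k}}\mapsto\pi_i^{\mathbf{k}}$ is automatic since the $x_i^{\mathbf{k}}$ generate $S_{\mathbf{k}}$.

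There is no real obstacle in this argument: it is essentially a bookkeeping exercise once duality is in hand. The only point requiring care is the set-theoretic identification $Z=\Delta_{\mathbf{k}}$, and even there the verification is immediate once one notices that the condition $k_i\geq 1$ forces the automatic bound $a_i\leq 1$ in $Z$, so that the affine-hyperplane slice $\{\sum k_i a_i=1,\, a_i\geq 0\}$ already lies inside the unit cube and coincides with the standard simplex on the vertices $e_i/k_i$.
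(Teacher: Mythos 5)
Your proposal is correct and follows essentially the same route as the paper's proof: identify $\Max S_{\mathbf{k}}$ as the solution set of $\sum_i k_i\pi_i=1$ in $[0,1]^n$, check by direct computation that this set is $\Delta_{\mathbf{k}}$, and obtain the isomorphism statement from Theorem \ref{t:duality}. You merely spell out the convex-combination computation that the paper compresses into ``Computation confirms,'' which is a welcome addition but not a different argument.
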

\begin{proof}Applying the definition of the functor $\Max$ in \eqref{eq:MVdualbis}, the polyhedron $\Max S_{\mathbf{k}}$ dual to $S_{\mathbf{k}}$ is the solution set in $[0,1]^n$ of the equation $\sum_{i=1}^n k_i\pi_i=1$ that presents~$S_{\mathbf{k}}$ (or equivalently, it is the zero set of $\sigma(\pi_1,\ldots,\pi_n)$, where $\sigma$ is as in the definition of~$S_{\mathbf{k}}$). Computation confirms that this solution set is   $\Delta_{\mathbf{k}}$; and the remaining part of the statement is a direct consequence of  the duality Theorem \ref{t:duality}.
\end{proof}
\begin{corollary}\label{c:univgeom}There is exactly one state 
\begin{align}%\label{eq:univstatefin}
\upsilon_A\colon A \longrightarrow {\MM}\Delta_{\mathbf{k}}
\end{align}
that extends the assignment
\[
a_i\longmapsto \pi_i,\qquad i=1,\ldots,n,
\]
and this is the universal state of the MV-algebra $A=M_{k_1}\times\cdots\times M_{k_n}$.
\end{corollary}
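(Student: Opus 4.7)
The strategy is to combine Lemma \ref{l:geomrep} (which exhibits the universal state of $A$ as the state $A\to S_{\mathbf{k}}$ sending $a_i\mapsto x_i^{\mathbf{k}}$) with Lemma \ref{l:dualofrefsim} (which supplies the isomorphism $\phi\colon S_{\mathbf{k}}\to \MM \Delta_{\mathbf{k}}$ sending $x_i^{\mathbf{k}}\mapsto \pi_i^{\mathbf{k}}$). Concretely, I first handle existence and uniqueness of the state $\upsilon_A$ in the statement. By Lemma \ref{lem:ext}(2), an assignment of the atoms $a_1,\ldots,a_n$ to elements $b_1,\ldots,b_n\in \MM\Delta_{\mathbf{k}}$ extends (necessarily uniquely) to a state of $A$ if and only if $\sum_{i=1}^n k_i b_i=1$ holds in $\MM\Delta_{\mathbf{k}}$. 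For the assignment $a_i\mapsto \pi_i^{\mathbf{k}}$ this condition is precisely the partition-of-unity identity \eqref{eq:partitionofunitypi}, so the state $\upsilon_A$ exists and is the only state extending that assignment.

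For the universality claim, I would argue by transport along the isomorphism~$\phi$. By Lemma \ref{l:geomrep} the state $\upsilon_A^{S}\colon A\to S_{\mathbf{k}}$ determined by $a_i\mapsto x_i^{\mathbf{k}}$ is universal for $A$. Composing with the isomorphism $\phi$ of Lemma \ref{l:dualofrefsim} yields a state $\phi\circ \upsilon_A^{S}\colon A\to \MM\Delta_{\mathbf{k}}$ whose value on each atom $a_i$ is $\phi(x_i^{\mathbf{k}})=\pi_i^{\mathbf{k}}$; by the uniqueness clause established in the previous paragraph, $\phi\circ \upsilon_A^{S}=\upsilon_A$. Since universal states are preserved under post-composition with an isomorphism of codomains (any state $s\colon A\to N$ factors uniquely through $\upsilon_A^{S}$, and thus through $\upsilon_A=\phi\circ\upsilon_A^{S}$ via the unique homomorphism precomposed with $\phi^{-1}$), the state $\upsilon_A$ is the universal state of $A$.

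There is no real obstacle here: the whole argument is a direct synthesis of Lemmas \ref{lem:ext}(2), \ref{l:geomrep}, and \ref{l:dualofrefsim}. The only matter requiring a moment of care is to make sure that the assignment in the corollary, written as $a_i\mapsto \pi_i$, is interpreted as $a_i\mapsto \pi_i^{\mathbf{k}}$ in $\MM\Delta_{\mathbf{k}}$, so that the partition-of-unity equality \eqref{eq:partitionofunitypi} actually holds (since the unrestricted $\pi_i$ on $[0,1]^n$ do not satisfy $\sum_i k_i\pi_i=1$ globally, but their restrictions to $\Delta_{\mathbf{k}}$ do).
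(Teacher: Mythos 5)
Your proposal is correct and follows essentially the same route as the paper's proof: existence and uniqueness via Lemma \ref{lem:ext}(2) together with the partition-of-unity identity \eqref{eq:partitionofunitypi}, and universality by transporting the universal state of Lemma \ref{l:geomrep} along the isomorphism of Lemma \ref{l:dualofrefsim}. Your closing remark that $\pi_i$ must be read as the restriction $\pi_i^{\mathbf{k}}$ is a sensible clarification of the paper's notation.
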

\begin{proof}The existence and uniqueness of the extension follows from Lemma \ref{lem:ext}, upon recalling \eqref{eq:partitionofunitypi}. That the state in question is naturally isomorphic to $\upsilon_A$ in Lemma \ref{l:geomrep} follows at once from the isomorphism in Lemma \ref{l:dualofrefsim}.
\end{proof}
We record a well-known elementary property of $\Z$-maps for which we could not locate a reference (however, see the related \cite[Lemma 3.7]{Mundici11}).
\begin{lemma}\label{l:zmapsdivide}Let $X\subseteq \R^d$ and $Y\subseteq \R^{d'}$ be rational polyhedra, for integers $d,d' \geq 0$, and let $f\colon X\to Y$ be a $\Z$-map.
For each $x \in X\cap \Q^d$, the number $fx$ is rational and $\den{fx}$ divides $\den{x}$; in symbols, $\den{fx}\mid \den{x}$.
\end{lemma}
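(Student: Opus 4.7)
The plan is to reduce to the scalar case and then exploit directly the integer-coefficient affine representation of each piece.

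First, I would observe that, by the definition of the denominator of a rational vector, for any $y \in \Q^{d'}$ written in coordinates as $y = (y_1,\ldots,y_{d'})$, the denominator $\den{y}$ equals the least common multiple of the scalar denominators $\den{y_j}$. Consequently, if every scalar component $f_j(x)$ is rational with $\den{f_j(x)} \mid \den{x}$, then so is $f(x)$, since the lcm of a finite family of divisors of $\den{x}$ divides $\den{x}$. This reduces the claim to the case $d' = 1$, i.e., to $\Z$-valued (scalar) $\Z$-maps $g \colon X \to \R$.

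Next, fix $x \in X \cap \Q^d$ and write $x = (b_1/q, \ldots, b_d/q)$ with $b_j \in \Z$ and $q \coloneqq \den{x}$; such a representation is possible by the definition of denominator. By the defining property of a $\Z$-map, there exist affine functions $l_1,\ldots,l_u \colon \R^d \to \R$ each restricting to a function $\Z^d \to \Z$, and an index $i$ such that $g(x) = l_i(x)$. Writing $l_i(y) = a_0 + a_1 y_1 + \cdots + a_d y_d$, the requirement that $l_i$ restrict to a function $\Z^d \to \Z$ forces $a_0, a_1, \ldots, a_d \in \Z$ (by evaluating at $0$ and at the standard basis vectors). Substituting $x = (b_1/q,\ldots,b_d/q)$ yields
\[
g(x) = l_i(x) = \frac{a_0 q + a_1 b_1 + \cdots + a_d b_d}{q},
\]
a rational number whose denominator in lowest terms divides $q = \den{x}$. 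Thus $\den{g(x)} \mid \den{x}$, and combining with the first step completes the proof.

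There is no real obstacle here; the only mild subtlety is ensuring the affine pieces of a $\Z$-map have integer coefficients (not merely integer values at lattice points) -- but this is immediate since an affine function $\R^d \to \R$ sending $\Z^d$ into $\Z$ must have integer coefficients, as one sees by evaluating at $0$ and at the standard basis. Everything else is a single arithmetic substitution.
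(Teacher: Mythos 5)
Your proof is correct and follows essentially the same route as the paper's: both exploit that at the rational point $x$ each scalar component agrees with an affine function having integer coefficients, substitute, and read off that the resulting denominator divides $\den{x}$, with the vector case handled via the lcm description of the denominator. The paper phrases this in matrix form $fx = Mx + c$ rather than componentwise, but the argument is the same.
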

\begin{proof}The $\Z$-map $f\colon X \to Y$ may be written in vectorial form as $(f_1, \ldots, f_{d'})$, with each $f_i\colon X \to \R$ a $\Z$-map. In the rest of this proof, assume all rational numbers are expressed in reduced form. Pick a point $x=(\frac{p_i}{q_i})\in X\cap \Q^d$,  and regard $x$ as a  column vector. By definition, each $f_i$ agrees   locally at the point $x$ with an affine function $\R^d \to \R$ with integer coefficients. Thus, there is a column vector $c=(c_i)\in\Z^{d'}$ together with a $(d'\times d)$ matrix $M=(z_{ij})$ with integer entries such that 
\begin{align}\label{eq:matrix}
fx=Mx+c.
\end{align}
This shows  that $fx$ is rational if $x$ is. Further, let us write $fx=(\frac{a_i}{b_i})\in\Q^{d'}$. By \eqref{eq:matrix} we have  $\frac{a_i}{b_i}=\sum_{j=1}^{d}z_{ij}\frac{p_j}{q_j} + c_i$, so that $b_i \mid \den{x}$, and therefore ${\rm lcm}\,\{b_i\}_{i=1}^{d'}=\den{fx}\mid \den{x}$.
\end{proof}
We can now prove the promised lemma.
\begin{lemma}\label{l:inj} If $h\colon A\to B$ is any injective homomorphism between finite MV-algebras, $\Upsilon h\colon\Upsilon A \to \Upsilon B$ is injective. 
\end{lemma}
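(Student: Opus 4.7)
The plan is to use the duality Theorem \ref{t:duality} to translate injectivity of $\Upsilon h$ into surjectivity of an associated $\Z$-map between regular simplices. Write $A\cong M_{k_1}\times\cdots\times M_{k_n}$ and $B\cong M_{l_1}\times\cdots\times M_{l_m}$, with atoms $a_1,\ldots,a_n$ and $b_1,\ldots,b_m$ respectively, and set $\mathbf{k}\coloneqq(k_1,\ldots,k_n)$, $\mathbf{l}\coloneqq(l_1,\ldots,l_m)$. By Corollary \ref{c:univgeom}, identify $\Upsilon A$ with $\MM\Delta_{\mathbf{k}}$ and $\Upsilon B$ with $\MM\Delta_{\mathbf{l}}$, so that $\upsilon_A(a_i)=\pi_i^{\mathbf{k}}$ and $\upsilon_B(b_j)=\pi_j^{\mathbf{l}}$.

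First I would analyse $h$ componentwise: each coordinate projection $h_j\coloneqq\pi_j\circ h\colon A\to M_{l_j}$ has maximal kernel (since $M_{l_j}$ is simple), hence factors as $A\twoheadrightarrow M_{k_{f(j)}}\hookrightarrow M_{l_j}$ for a unique index $f(j)\in\{1,\ldots,n\}$, with the inclusion forcing $k_{f(j)}\mid l_j$. A direct computation in $\Xi B$ then yields
\[
 h(a_i)\;=\;\sum_{j\in f^{-1}(i)}\frac{l_j}{k_i}\,b_j,
\]
because the inclusion $M_{k_i}\hookrightarrow M_{l_j}$ sends $1/k_i$ to $(l_j/k_i)/l_j$, i.e., to $l_j/k_i$ copies of the atom $b_j$. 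Injectivity of $h$ forces $f\colon\{1,\ldots,m\}\to\{1,\ldots,n\}$ to be surjective: otherwise an atom $a_i$ with $i\notin f(\{1,\ldots,m\})$ would be sent to zero.

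Next, applying $\upsilon_B$ to the displayed identity and using the defining property $\Upsilon h\circ\upsilon_A=\upsilon_B\circ h$ gives
\[
  \Upsilon h(\pi_i^{\mathbf{k}})\;=\;\sum_{j\in f^{-1}(i)}\frac{l_j}{k_i}\,\pi_j^{\mathbf{l}}.
\]
By Theorem \ref{t:duality}, $\Upsilon h$ coincides with $\MM\phi$ for a unique $\Z$-map $\phi\colon\Delta_{\mathbf{l}}\to\Delta_{\mathbf{k}}$ satisfying $\pi_i^{\mathbf{k}}\circ\phi=\Upsilon h(\pi_i^{\mathbf{k}})$. Solving coordinate-wise shows that for $y=(s_1/l_1,\ldots,s_m/l_m)\in\Delta_{\mathbf{l}}$ one has $(\phi(y))_i=k_i^{-1}\sum_{j\in f^{-1}(i)}s_j$, and the partition-of-unity identity $\sum_i k_i(\phi(y))_i=\sum_j s_j=1$ confirms $\phi(y)\in\Delta_{\mathbf{k}}$.

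Finally, I would prove that $\phi$ is surjective, which gives injectivity of $\Upsilon h=\MM\phi$ since pre-composition with a surjective map is injective on functions. Given $x=(t_1/k_1,\ldots,t_n/k_n)\in\Delta_{\mathbf{k}}$ with $\sum_i t_i=1$, use surjectivity of $f$ to pick $j_i\in f^{-1}(i)$ for each $i$, and set $s_{j_i}\coloneqq t_i$, $s_j\coloneqq 0$ otherwise; then $y\coloneqq(s_1/l_1,\ldots,s_m/l_m)$ lies in $\Delta_{\mathbf{l}}$ and satisfies $\phi(y)=x$. The main bookkeeping obstacle is the scaling in the computation of $h(a_i)$: correctly tracking the factors $l_j/k_i$ and verifying that the formula for $\phi$ lands in $\Delta_{\mathbf{k}}$ rather than merely in $[0,1]^n$. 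Both are ultimately forced by $\Upsilon h$ being a genuine MV-homomorphism, whose dual under Theorem \ref{t:duality} is automatically a $\Z$-map between the correct simplices.
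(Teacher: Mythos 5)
Your proof is correct and follows essentially the same route as the paper's: both identify $\Upsilon A$ and $\Upsilon B$ with $\MM\Delta_{\mathbf{k}}$ and $\MM\Delta_{\mathbf{l}}$ via Corollary \ref{c:univgeom}, exhibit the dual $\Z$-map $\Delta_{\mathbf{l}}\to\Delta_{\mathbf{k}}$ (your $\phi$ is exactly the paper's affine extension $\alpha$ of $\Max h$), and deduce injectivity of $\Upsilon h$ from surjectivity of that map. The only cosmetic difference is that you establish surjectivity by a direct coordinate computation on atoms, where the paper argues that the injective $h$ is a monomorphism, hence dualises to an epimorphism and so to a surjection of vertex sets.
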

\begin{proof}Let $\{e_1/k_1,\ldots,e_n/k_n\}\seq \R^n$ be the dual finite rational polyhedron of $A$, with $\den{e_i/k_i}=k_i$, $i=1,\ldots,n$. Similarly, let $\{e_1/t_1,\ldots,e_m/t_m\}\seq \R^m$ be the dual of $B$, with $\den{e_i/t_i}=t_i$, $i=1,\ldots,m$. Let us write  \[h^*\coloneqq\Max h\colon \{e_1/t_1,\ldots,e_m/t_m\} \to \{e_1/k_1,\ldots,e_n/k_n\}\]
for the $\Z$-map dual to $h$.  By Lemma \ref{l:zmapsdivide} we  have $k_i\mid t_j$ whenever $h^*(e_j/t_j)=e_i/k_i$. Moreover, the $\Z$-map $h^*$ has exactly one extension to an affine map $\alpha\colon \Delta_{\mathbf{t}}\to\Delta_{\mathbf{k}}$, where $\mathbf{t}\coloneqq (t_i)_{i=1}^m$, by the elementary properties of affine functions and simplices. The computation in \cite[Lemma 3.7]{Mundici11} (or else, which is essentially the same, direct computation of the matrix form of $\alpha$ with respect to standard bases) confirms that $\alpha$ is a $\Z$-map. Since $h$ is injective, it is a monomorphism, and thus $h^*$ is an~epimorphism by duality. But then a straightforward verification shows that $h^*$ is a~surjective function. This entails at once that its affine extension $\alpha$ is surjective, too. For any two continuous functions $f,g\colon\Delta_{\mathbf{k}}\to \R$ with $f\neq g$ we then have $f\alpha\neq g\alpha$, by the surjectivity of $\alpha$. In particular, this means that the homomorphism
\[
{\MM}\alpha\colon {\MM}\Delta_{\mathbf{k}}\longrightarrow {\MM}\Delta_{\mathbf{t}},
\]
which by the definition of the functor $\MM$ carries a $\Z$-map $f\colon\Delta_{\mathbf{k}}\to [0,1]$ to the $\Z$-map $f\alpha\colon \Delta_{\mathbf{t}}\to [0,1]$, 
is injective. The proof is now completed by showing that $\Upsilon h$ is ${\MM}\alpha$ to within a natural isomorphism. That is, in light of Corollary \ref{c:univgeom}, we need to check that the  square
\[
\begin{tikzcd}
A \ar[d, "\upsilon_A"']    \ar[r,   "h"] & B \ar[d,  "\upsilon_B"]\\
{\MM}\Delta_{\mathbf{k}} \ar[r, "{\MM\alpha}"']&   {\MM}\Delta_{\mathbf{t}}
\end{tikzcd}
\]
commutes, which amounts to a straightforward application of the definitions which we leave to the reader.
\end{proof}
Finally:
\begin{proof}[Proof of Theorem \ref{t:univlocfinisreal}]Write $\Sigma$ for the directed partially ordered set of all finite subalgebras of $M$, so that $M=\bigcup_{A\in\Sigma}A$ (direct union). The functor $\Upsilon$ in \eqref{eq:upsilon} is left adjoint and thus preserves colimits  \cite[Theorem 1 in Chapter V.5]{Mac1998}. Moreover, by Lemma \ref{l:inj}, each inclusion $A\to B$ with $A,B\in\Sigma$ is sent by $\Upsilon$ to an injective homomorphism. Together with  Corollary \ref{c:univgeom}, this says that $\Upsilon M$ is a directed colimit of semisimple MV-algebras with injective transition homomorphisms.  Since the radical ideal of   semisimple MV-algebras is trivial,  the radical of the directed colimit $\Upsilon M$ is trivial, too, by \cite[Proposition 3.6.4]{CignoliOttavianoMundici00}. Hence, $\Upsilon M$ is semisimple.
\end{proof}
\begin{corollary}\label{c:choquet}For any locally finite MV-algebra $M$, the extended affine representation of $M$ (cf.\ Section \ref{s:choquet}) is the universal state of $M$. This happens, in particular, for all Boolean algebras.
 \end{corollary}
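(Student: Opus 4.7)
The strategy is simply to combine Theorem \ref{t:univlocfinisreal} with Proposition \ref{pro:semisimplecodomain}, translated across Mundici's equivalence of Theorem \ref{thm:generalGamma}. I would explicitly declare the extended affine representation of an MV-algebra $M$ to be $\Gamma e_{\Xi M}\colon M\to \Gamma\widehat{\Xi M}$, i.e.\ the restriction to unit intervals of the $\ell$-group extended affine representation of $\Xi M$; by Lemma \ref{l:extfunctor}(1) this is indeed a state of MV-algebras, and by the universal property of $\upsilon_M$ it factors uniquely through $\upsilon_M$ via a comparison homomorphism $q_M\colon\Upsilon M\to\Gamma\widehat{\Xi M}$.

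Next I would observe that under the identification $\Xi\Upsilon M\cong \Upsilon\Xi M$ supplied by Theorem \ref{t:eqcharofstates} together with Theorem \ref{thm:generalGamma}, the MV-algebraic comparison $q_M$ is precisely the restriction to unit intervals of the $\ell$-group comparison $q_{\Xi M}$ from \eqref{eq:comparison}. It follows that $q_M$ is an isomorphism of MV-algebras if and only if $q_{\Xi M}$ is an isomorphism of $\ell$-groups. By Proposition \ref{pro:semisimplecodomain}, the latter holds exactly when $\Xi\Upsilon M$ is Archimedean, and this in turn is equivalent to $\Upsilon M$ being semisimple, since semisimplicity of an MV-algebra corresponds to Archimedeanness of its enveloping unital $\ell$-group.

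Now I invoke Theorem \ref{t:univlocfinisreal}: for any locally finite MV-algebra $M$, the codomain $\Upsilon M$ is semisimple, so $q_M$ is an isomorphism and the universal state of $M$ coincides with its extended affine representation. The final clause of the corollary then follows because every Boolean algebra is locally finite: a Boolean algebra generated by $n$ elements is a quotient of the free Boolean algebra on $n$ generators, which has $2^{2^n}$ elements, so every finitely generated subalgebra is finite.

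The main obstacle, such as it is, is purely a matter of bookkeeping: one must verify carefully that the extended affine representation, the universal state, and the comparison homomorphism all transfer coherently between MV-algebras and unital Abelian $\ell$-groups under $\Gamma$ and $\Xi$. No new conceptual input is needed beyond the two results being combined.
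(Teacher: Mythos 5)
Your proof is correct and follows exactly the paper's route: the paper's own proof is the one-line combination of Theorem \ref{t:univlocfinisreal} and Proposition \ref{pro:semisimplecodomain} (plus the observation that finitely generated Boolean algebras are finite), with the translation across Mundici's equivalence left implicit. The extra bookkeeping you supply about transporting $e_G$, $\upsilon$, and the comparison map $q$ along $\Gamma$ and $\Xi$ is a faithful elaboration of what the paper takes for granted, not a different argument.
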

 \begin{proof}Theorem \ref{t:univlocfinisreal} and Proposition \ref{pro:semisimplecodomain}. The last assertion holds because of the standard fact that finitely generated Boolean algebras are finite. 
 \end{proof}
\section{Further research}
The present paper is part of a nascent programme aimed at exploring universal constructions in probability theory. While we refrain from sketching that programme here, we do   point out some connections between the line of research pursued in this paper, and many-valued logic.  \emph{Fuzzy Probability Logic} FP(\L) over infinite-valued \L ukasiewicz logic was developed by H\'{a}jek  \cite[Chapter 8.4]{Hajek98}, and later extended by Cintula and Noguera to a two-tier modal logic aimed at modelling uncertainty  \cite{CintulaNoguera14}. The logic FP(\L) formalises reasoning about properties of states, similarly to probabilistic logics designed for reasoning about probability.  The main feature of FP(\L) is a two-level syntax. Probability assessments are represented in the language by a unary modality ‘Probably’, which can be applied to Boolean formul\ae\ only. The class of MV-algebras and states provides a possible complete semantics for FP(\L); see  \cite{CintulaNoguera14} for more details. States as two-sorted  algebras, as introduced here, may provide an equivalent multi-sorted algebraic semantics to the logic FP(\L). Details will be pursued in further research.
 
 \section*{Acknowledgments}
 Both authors are deeply grateful to an anonymous reviewer for his/her careful and competent reading of a previous version of this paper, which contained  two significant oversights in the proof of Corollary \ref{c:existenceuniv}.

 The work of Tom\'a\v{s} Kroupa has been supported from the GA\v{C}R grant project GA17-04630S and from the project RCI (CZ.02.1.01/0.0/0.0/16\_019/0000765).

\end{document}